\title{Which nestohedra are removahedra?}
\thanks{Supported by the spanish MICINN grant MTM2011-22792 and the french ANR grant EGOS (12 JS02 002 01).}
\author{Vincent Pilaud}
\address{CNRS \& LIX, \'Ecole Polytechnique, Palaiseau}
\email{vincent.pilaud@lix.polytechnique.fr}
\urladdr{http://www.lix.polytechnique.fr/~pilaud/}
\newtheorem{theorem}{Theorem}
\newtheorem{corollary}[theorem]{Corollary}
\newtheorem{lemma}[theorem]{Lemma}
\newtheorem{definition}[theorem]{Definition}
\theoremstyle{definition}
\newtheorem{example}[theorem]{Example}
\newtheorem{remark}[theorem]{Remark}
\newcommand{\R}{\mathbb{R}} 
\newcommand{\N}{\mathbb{N}} 
\newcommand{\Z}{\mathbb{Z}} 
\newcommand{\I}{\mathbb{I}} 
\newcommand{\fS}{\mathfrak{S}} 
\newcommand{\HH}{\mathbb{H}} 
\renewcommand{\b}[1]{\mathbf{#1}} 
\newcommand{\set}[2]{\left\{ #1 \;\middle|\; #2 \right\}} 
\newcommand{\bigset}[2]{\big\{ #1 \;|\; #2 \big\}} 
\newcommand{\biggset}[2]{\bigg\{ #1 \;\bigg|\; #2 \bigg\}} 
\newcommand{\ssm}{\smallsetminus} 
\newcommand{\dotprod}[2]{\langle #1 | #2 \rangle} 
\newcommand{\symdif}{\triangle} 
\newcommand{\one}{{1\!\!1}} 
\newcommand{\eqdef}{\mbox{\,\raisebox{0.2ex}{\scriptsize\ensuremath{\mathrm:}}\ensuremath{=}\,}} 
\newcommand{\defeq}{\mbox{~\ensuremath{=}\raisebox{0.2ex}{\scriptsize\ensuremath{\mathrm:}} }} 
\newcommand{\simplex}{\triangle} 
\newcommand{\Perm}{\mathsf{Perm}} 
\newcommand{\Remo}{\mathsf{Remo}} 
\newcommand{\Defo}{\mathsf{Defo}} 
\newcommand{\Mink}{\mathsf{Mink}} 
\newcommand{\ground}{\mathsf{S}} 
\newcommand{\building}{\mathsf{B}} 
\newcommand{\summands}{\mathsf{C}} 
\newcommand{\nested}{\mathsf{N}} 
\newcommand{\nestedComplex}{\mathcal{N}} 
\newcommand{\nestedFan}{\mathcal{F}} 
\newcommand{\btree}{\mathsf{T}} 
\newcommand{\pathG}{\mathrm{P}} 
\newcommand{\cycle}{\mathrm{O}} 
\newcommand{\tree}{\mathrm{T}} 
\newcommand{\graphG}{\mathrm{G}} 
\newcommand{\descendants}{\mathrm{desc}} 
\newcommand{\bpath}{\pi} 
\newcommand{\bpaths}{\Pi} 
\newcommand{\point}{\mathbf{a}} 
\newcommand{\Hyp}{\b{H}^=} 
\newcommand{\HS}{\b{H}^\ge} 
\newcommand{\fan}{\mathcal{F}} 
\newcommand{\normalCone}{\mathbf{C}} 
\newcommand{\topmost}{\wedge} 
\DeclareMathOperator{\conv}{conv} 
\DeclareMathOperator{\cone}{cone} 
\newcommand{\fref}[1]{Figure~\ref{#1}} 
\newcommand{\ie}{\textit{i.e.}~} 
\newcommand{\eg}{\textit{e.g.}~} 
\newcommand{\aka}{\textit{aka.}~} 
\newcommand{\ex}[1]{^{\textrm{ex#1}}} 
\definecolor{darkblue}{rgb}{0,0,0.7} 
\newcommand{\darkblue}{\color{darkblue}} 
\newcommand{\defn}[1]{\emph{\darkblue #1}} 
\begin{document}

\begin{abstract}
A removahedron is a polytope obtained by deleting inequalities from the facet description of the classical permutahedron. Relevant examples range from the associahedra to the permutahedron itself, which raises the natural question to characterize which nestohedra can be realized as removahedra. In this note, we show that the nested complex of any connected building set closed under intersection can be realized as a removahedron. We present two different complementary proofs: one based on the building trees and the nested fan, and the other based on Minkowski sums of dilated faces of the standard simplex. In general, this closure condition is sufficient but not necessary to obtain removahedra. However, we show that it is also necessary to obtain removahedra from graphical building sets, and that it is equivalent to the corresponding graph being chordful (\ie any cycle induces a clique).

\medskip
\noindent
{\sc keywords.}
Building set, nested complex, nestohedron, graph associahedron, generalized permutahedron, removahedron.
\end{abstract}

\maketitle


\section{Introduction}

The \defn{permutahedron} is a classical polytope, obtained as the convex hull of all permutations of~$[n]$, which is closely related to various properties of the symmetric group. Relevant polytopes can be obtained from the permutahedron by:
\begin{enumerate}[(i)]
\item gliding its facets orthogonally to its normal vectors without passing any vertex; the resulting polytopes are called \defn{deformed permutahedra} and were studied by A.~Postnikov in~\cite{Postnikov};
\\[-.3cm]
\item deleting inequalities from its facet description; we call the resulting polytopes \defn{removahedra}.
\end{enumerate}
Our interest in deformed permutahedra and removahedra is motivated by certain realizations of the associahedron and their generalizations to graph associahedra and nestohedra. An \defn{associahedron} is a polytope whose $1$-skeleton realizes the rotation graph on binary trees with $n$ vertices. In~\cite{Loday}, J.-L.~Loday constructed a remarkable realization of the associahedron which happened to be a removahedron. Following the same line, C.~Hohlweg and C.~Lange~\cite{HohlwegLange} later constructed~$2^n$ removahedra realizing the associahedron. In a different direction, M.~Carr and S.~Devadoss~\cite{CarrDevadoss} defined \defn{graph associahedra}, which realize the clique complex of a compatibility relation on tubes (connected induced subgraphs) of a fixed graph. Extending these polytopes, A.~Postnikov~\cite{Postnikov} and independently by E.-M.~Feichtner and B.~Sturmfels~\cite{FeichtnerSturmfels} constructed \defn{nestohedra}, which realize the nested complex on a building set, see Section~\ref{subsec:nestedComplex} for definitions. The resulting polytopes are all deformed permutahedra, but not always removahedra.

In this note, we investigate which nestohedra can be realized as removahedra. We show that the nested complex of a connected building set closed under intersection can always be realized as a removahedron. For graph associahedra, this closure condition is equivalent to the underlying graph being chordful (\ie any cycle induces a clique). Conversely, we show that graph associahedra realizable as removahedra are precisely chordful graph associahedra. We develop two complementary approaches to these questions. The first one, based on building trees and the nested fan, describes the vertices of the resulting removahedra. The other one, based on Minkowski sums of dilated faces of the standard simplex, describes the dilation factors in the Minkowski decomposition of the resulting removahedra.


\section{Preliminaries}

\subsection{Permutahedra, deformed permutahedra, and removahedra}

All polytopes considered in this note are closely related to the braid arrangement and to the classical permutahedron. Therefore, we first recall the definition and basic properties of the permutahedron (see~\mbox{\cite[Lect.\,0]{Ziegler}}) and certain relevant deformations of it. We fix a finite ground set~$\ground$ and denote by~$\set{e_s}{s \in \ground}$ the canonical basis of~$\R^\ground$.

\begin{definition}
\label{def:permutahedron}
The \defn{permutahedron}~$\Perm(\ground)$ is the convex polytope obtained equivalently as
\begin{enumerate}[(i)]
\item either the convex hull of the vectors~$\sum_{s \in \ground} \sigma(s) e_s \in \R^\ground$ for all bijections~$\sigma : \ground \to [|\ground|]$,
\item or the intersection of the hyperplane~$\HH \eqdef \Hyp(\ground)$ with the half-spaces~$\HS(R)$ for~${\varnothing \ne R \subset \ground}$, where
\[
\Hyp(R) \eqdef \biggset{\b{x} \in \R^\ground}{\sum_{r \in R} x_r = \binom{|R|+1}{2}} \quad \text{and} \quad \HS(R) \eqdef \biggset{\b{x} \in \R^\ground}{\sum_{r \in R} x_r \ge \binom{|R|+1}{2}},
\]
\item or the Minkowski sum of all segments~$[e_r,e_s]$ for~$r \ne s \in \ground$.
\end{enumerate}
\end{definition}

The normal fan of the permutahedron is the fan defined by the \defn{braid arrangement} in~$\HH$, \ie the arrangement of the hyperplanes~$\set{\b{x} \in \HH}{x_r = x_s}$ for~$r \ne s \in \ground$. Its $k$-dimensional cones correspond to the surjections from~$\ground$ to~$[k+1]$, or equivalently to the ordered partitions of~$\ground$ into~$k+1$ parts. In this note, we are interested in the following deformations of the permutahedron~$\Perm(\ground)$. These polytopes were called \defn{generalized permutahedra} by A.~Postnikov~\cite{Postnikov, PostnikovReinerWilliams}, but we prefer the term \defn{deformed} to distinguish from the other natural generalization of the permutahedron to finite Coxeter groups.

\begin{definition}[\cite{Postnikov, PostnikovReinerWilliams}]
\label{def:deformedPermutahedron}
A \defn{deformed permutahedron} is a polytope whose normal fan coarsens that of the permutahedron. Equivalently~\cite{PostnikovReinerWilliams}, it is a polytope defined as
\[
\Defo(\b{z}) \eqdef \biggset{\b{x} \in \R^\ground}{\sum_{s \in \ground} x_s = z_\ground \text{ and } \sum_{r \in R} x_r \ge z_R \text{ for all } \varnothing \ne R \subset \ground},
\]
for some~$\b{z} \eqdef (z_R)_{R \subseteq \ground} \in (\R_{>0})^{2^\ground}$,  such that $z_R + z_{R'} \le z_{R \cup R'} + z_{R \cap R'}$ for any~$R, R' \subseteq \ground$.
\end{definition}

As the permutahedron itself, all deformed permutahedra can be decomposed as Minkowski sums and differences of dilates of faces of the standard simplex~\cite{ArdilaBenedettiDoker}. For our purposes, we only need here the following simpler fact, already observed in~\cite{Postnikov}.

\begin{remark}[\cite{Postnikov}]
For any~$S \subseteq \ground$, we consider the face~$\simplex_S \eqdef \conv \set{e_s}{s \in S}$ of the standard simplex~$\simplex_\ground$.
For any~$\b{y} \eqdef (y_S)_{S \subseteq \ground}$ where all~$y_S$ are non-negative real numbers, the Minkowski sum
\[
\Mink(\b{y}) \eqdef \sum_{S \subseteq \ground} y_S \simplex_S
\]
of dilated faces of the standard simplex is a deformed permutahedron~$\Defo(\b{z})$, and the values ${\b{z} = (z_R)_{R \subseteq \ground}}$ of the right hand sides of the inequality description of~$\Mink(\b{y}) = \Defo(\b{z})$ are given~by
\[
z_R = \sum_{S \subseteq R} y_S.
\]
\end{remark}

\begin{remark}
\label{rem:genericMinkowskiSum}
As the normal fan of a Minkowski sum is just the common refinement of the normal fans of its summands, and the normal fan is invariant by dilation, the combinatorics of the face lattice of the Minkowski sum~$\Mink(\b{y})$ only depends on the set~$\set{S \subseteq \ground}{y_S > 0}$ of non-vanishing dilation factors. When we want to deal with combinatorics only, we denote generically by~$\Mink[\summands]$ any Minkowski sum~$\Mink(\b{y})$ with dilation factors~$\b{y} = (y_S)_{S \subseteq \ground}$ such that~$\summands = \set{S \subseteq \ground}{y_S > 0}$.
\end{remark}

Among these deformed permutahedra, some are simpler than the others as all their facet defining inequalities are also facet defining inequalities of the classical permutahedron. In other words, they are obtained from the permutahedron by removing facets, which motivates the following name.

\begin{definition}
\label{def:removahedron}
A \defn{removahedron} is a polytope obtained by removing inequalities from the facet description of the permutahedron, \ie a polytope defined for some~$\building \subseteq 2^\ground$ by
\[
\Remo(\building) \eqdef \HH \cap \bigcap_{B \in \building} \HS(B) = \biggset{\b{x} \in \HH}{\sum_{s \in B} x_s \ge \binom{|B|+1}{2} \text{ for all } B \in \building}.
\]
\end{definition}

\subsection{Building set, nested complex, and nested fan}
\label{subsec:nestedComplex}

We now switch to building sets and their nested complexes. We only select from~\cite{CarrDevadoss, Postnikov, FeichtnerSturmfels, Zelevinsky} the definitions needed in this note. More details and motivation can be found therein.

\begin{definition}
\label{def:building}
A \defn{building set}~$\building$ on a ground set~$\ground$ is a collection of non-empty subsets of~$\ground$~such~that
\begin{enumerate}[(B1)]
\item if~$B,B' \in \building$ and~$B \cap B' \ne \varnothing$, then~$B \cup B' \in \building$, and
\item $\building$ contains all singletons~$\{s\}$ for~$s \in \ground$.
\end{enumerate} 
A building set is \defn{connected} if~$\ground$ is the unique maximal element. Moreover, we say that a building set~$\building$ is \defn{closed under intersection} if~$B, B'\in\building$ implies $B\cap B'\in \building \cup \{\varnothing\}$.
\end{definition}

All building sets in this manuscript are assumed to be connected and we will study the relation between removahedra and building sets closed under intersection. We first recall a general example of building sets, arising from connected subgraphs of a graph.

\begin{example}
\label{exm:graphicalBuildingSet}
Given a graph~$\graphG$ with vertex set~$\ground$, we denote by~$\building\graphG$ the \defn{graphical building set} on~$\graphG$, \ie the collection of all non-empty subsets of~$\ground$ which induce connected subgraphs of~$\graphG$. The maximal elements of~$\building\graphG$ are the vertex sets of the connected components of~$\graphG$, and we will therefore always assume that the graph~$\graphG$ is connected. We call a graph~$\graphG$ \defn{chordful} if any cycle of~$\graphG$ induces a clique. Observe in particular that every tree is chordful. The following statement describes the graphical building sets of chordful graphs.
\end{example}

\begin{lemma}
\label{lem:graphicalBuildingSet}
A (finite connected) graph~$\graphG$ is chordful if and only if its graphical building set is closed under intersection.
\end{lemma}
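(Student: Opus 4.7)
The plan is to prove the two implications separately; the ``$\Leftarrow$'' direction is an explicit construction, while the ``$\Rightarrow$'' direction requires a careful minimization argument.

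For the ``$\Leftarrow$'' direction I would argue by contrapositive: assume $\graphG$ is not chordful, so there is a cycle $C = v_1 v_2 \cdots v_k v_1$ in $\graphG$ whose vertex set is not a clique. Since a triangle always induces a $K_3$, this forces $k \ge 4$, and I can choose $v_a, v_b \in V(C)$ with $v_a \not\sim v_b$ in $\graphG$. Such $v_a, v_b$ are in particular non-consecutive on $C$, so cutting $C$ at these two vertices yields two arcs $P_1, P_2$ each of length at least $2$. Setting $B \eqdef V(P_1)$ and $B' \eqdef V(P_2)$, both $\graphG[B]$ and $\graphG[B']$ are connected (each arc is a spanning path), so $B, B' \in \building\graphG$; on the other hand $B \cap B' = \{v_a, v_b\}$ is non-empty while $\graphG[\{v_a, v_b\}]$ has no edges, so $\building\graphG$ is not closed under intersection.

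For the ``$\Rightarrow$'' direction, assume $\graphG$ chordful and fix $B, B' \in \building\graphG$ with $B \cap B' \ne \varnothing$. Arguing by contradiction, pick $u, v \in B \cap B'$ lying in distinct components $C_u, C_v$ of $\graphG[B \cap B']$; then $u \not\sim v$ in $\graphG$, for otherwise the edge $uv$ would lie in $\graphG[B \cap B']$ and connect $C_u$ to $C_v$. Now choose a path $P$ from $u$ to $v$ in $\graphG[B]$ and a path $P'$ from $u$ to $v$ in $\graphG[B']$ that jointly minimize $|P| + |P'|$ over all choices of $(u,v)$ in distinct components and of such paths.

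The crux is to show that $P$ and $P'$ share no internal vertex. Indeed, if $w$ were such a shared internal vertex, then $w \in B \cap B'$, hence $w$ lies in some component $C_w$ of $\graphG[B \cap B']$; since $C_u \ne C_v$, $C_w$ must differ from at least one of them, say $C_w \ne C_v$. Replacing $(u, v)$ by $(w, v)$ and $(P, P')$ by the respective sub-paths from $w$ to $v$ produces a valid choice of strictly smaller total length (as $w$ is internal to both $P$ and $P'$), contradicting minimality. Once internal disjointness is established, $P \cup P'$ is a cycle in $\graphG$ containing $u$ and $v$, of length $|P| + |P'| \ge 4$ since $u \not\sim v$ forces $|P|, |P'| \ge 2$. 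Chordfulness then forces its vertex set to induce a clique, whence $u \sim v$ — the desired contradiction. The main obstacle I anticipate is precisely this internal-disjointness step: the minimization must be carried out simultaneously over endpoints and path pairs so that any shared internal vertex genuinely yields a strictly shorter witness.
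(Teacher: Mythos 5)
Your proof is correct. The converse direction (not chordful implies not closed under intersection) is exactly the paper's argument: cut a cycle with a missing chord at the two non-adjacent vertices into two arcs $B$, $B'$, whose intersection is the disconnected pair. The direct direction, however, takes a genuinely different route. The paper fixes arbitrary $s,t \in B \cap B'$, takes paths $P$ in $\graphG[B]$ and $P'$ in $\graphG[B']$, and \emph{constructively} reroutes across the cycles of the symmetric difference $P \symdif P'$, replacing subpaths by chords (available by chordfulness) to exhibit an explicit path from $s$ to $t$ inside $B \cap B'$. You instead argue by contradiction: among all pairs of vertices lying in distinct components of $\graphG[B \cap B']$ and all connecting paths $P \subseteq \graphG[B]$, $P' \subseteq \graphG[B']$, you minimize $|P|+|P'|$, show the two paths must then be internally disjoint, and apply chordfulness to the cycle $P \cup P'$ to force the edge $uv$, contradicting the choice of $u,v$ in distinct components. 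Your extremal argument is airtight precisely where the paper's sketch is most delicate (the claim that chord-splicing through the cycles of $P \symdif P'$ yields a path staying in $B \cap B'$), at the cost of being non-constructive: it only refutes disconnectedness, whereas the paper's version produces the connecting path in $B \cap B'$ for every pair of vertices.
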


\begin{proof}
Assume that~$\graphG$ is chordful, and consider~$B,B' \in \building\graphG$ and $s,t \in B \cap B'$. As~$B$ and~$B'$ induce connected subgraphs of~$\graphG$, there exists paths~$P$ and~$P'$ between~$s$ and~$t$ in~$\graphG$ whose vertex sets are contained in~$B$ and~$B'$, respectively. The symmetric difference~$P \symdif P'$ of these paths is a collection of cycles. Since $\graphG$ is chordful, we can replace in each of these cycles the subpath of~$P$ (resp.~of~$P'$) by a chord. We thus obtain a path from~$s$ to~$t$ which belongs to~$B \cap B'$. It follows that~$B \cap B'$ induces a connected subgraph of~$G$ and thus that~$B \cap B' \in \building\graphG$.

Assume reciprocally that~$\graphG$ has a cycle~$(s_i)_{i \in \Z_\ell}$, with a missing chord~$s_xs_y$. Consider the subsets~$B \eqdef \set{s_i}{x \le i \le y}$ and~$B' \eqdef \set{s_i}{y \le i \le x}$, where the inequalities between labels in~$\Z_\ell$ have to be understood cyclically. Clearly,~$B,B' \in \building\graphG$ while~$B \cap B' = \{s_x,s_y\} \notin \building\graphG$.
\end{proof}

\begin{example}
\label{exm:buildingSets}
The following sets are building sets:
\begin{itemize}
\item $\building\ex{0} \eqdef 2^{[4]} \ssm \{\varnothing\}$ is the graphical building set over the complete graph~$K_4$,
\item $\building\ex{1} \eqdef 2^{[4]} \ssm \big\{\varnothing, \{1,3\} \! \big\}$ is the graphical building set over the graph~$K_4 \ssm \big\{ \! \{1,3\}\! \big\}$,
\item $\building\ex{2} \eqdef 2^{[4]} \ssm \big\{\varnothing, \{1,3\}, \{1,4\}, \{1,3,4\} \! \big\}$ is the graphical building set over~$K_4 \ssm \big\{ \! \{1,3\}, \{1,4\} \! \big\}$,
\item $\building\ex{3} \eqdef \big\{ \! \{1\}, \{2\}, \{3\}, \{4\}, \{5\}, \{1,2,3\}, \{1,3,4,5\}, \{1,2,3,4,5\} \! \big\}$ is not graphical,
\item $\building\ex{4} \eqdef \big\{ \! \{1\}, \{2\}, \{3\}, \{4\}, \{5\}, \{1,2,3,4\}, \{3,4,5\}, \{1,2,3,4,5\} \! \big\}$ is not graphical.
\end{itemize}
The two building sets~$\building\ex{0}$ and~$\building\ex{2}$ are closed under intersection, while the other three are not.
\end{example}

In this note, we focus on polytopal realizations of the nested complex of a building set, a simplicial complex defined below. Following~\cite{Zelevinsky}, we do not include~$\ground$ in the definition of $\building$-nested sets in order for the $\building$-nested complex to be a simplicial complex.

\begin{definition}
\label{def:nested}
A \defn{$\building$-nested set}~$\nested$ is a subset of~$\building \ssm \{\ground\}$ such that
\begin{enumerate}[(N1)]
\item for any~$N,N' \in \nested$, either~$N \subseteq N'$ or~$N' \subseteq N$ or~$N \cap N' = \varnothing$, and
\item for any~$k \ge 2$ pairwise disjoint sets~$N_1,\dots,N_k \in \nested$, the union~$N_1 \cup \dots \cup N_k$ is not in~$\building$.
\end{enumerate}
The \defn{$\building$-nested complex} is the simplicial complex~$\nestedComplex(\building)$ of all $\building$-nested sets.
\end{definition}

\begin{example}
For a graphical building set~$\building\graphG$, Conditions~\textit{(N1)} and~\textit{(N2)} in Definition~\ref{def:nested} can be replaced by the following: for any~$N,N' \in \nested$, either~$N \subseteq N'$ or~$N' \subseteq N$ or~$N \cup N' \notin \building\graphG$. In particular, the $\building\graphG$-nested complex is a clique complex: a simplex belongs to~$\building\graphG$ if and only if all its edges belong to~$\building\graphG$.
\end{example}

The $\building$-nested sets can be represented by the inclusion poset of their elements. Since we only consider connected building sets, the Hasse diagrams of these posets are always trees. In the next definition, we consider rooted trees whose vertices are labeled by subsets of~$\ground$. For any vertex~$v$ in a rooted tree~$\btree$, we call \defn{descendant set} of~$v$ in~$\btree$ the union~$\descendants(v, \btree)$ of the label sets of all descendants of~$v$ in~$\btree$, including the label set of the vertex~$v$ itself. The $\building$-nested sets are then in bijection with the following $\building$-trees.

\begin{definition}
\label{def:btrees}
A \defn{$\building$-tree} is a rooted tree whose label sets partition~$\ground$ and such that
\begin{enumerate}
\item for any vertex~$v$ of~$\btree$, the descendant set~$\descendants(v, \btree)$ belongs to~$\building$,
\item for any~$k \ge 2$ incomparable vertices~$v_1, \dots, v_k \in \btree$, the union~$\bigcup_{i \in [k]} \descendants(v_i, \btree)$ is not in~$\building$.
\end{enumerate}
\end{definition}

We denote by~$\nested(\btree) \eqdef \set{\descendants(v, \btree)}{v \text{ vertex of } \btree \text{ distinct from its root}}$ the $\building$-nested set corresponding to a $\building$-tree~$\btree$. Note that $\nested(\btree)$ is a maximal $\building$-nested set if and only if all the vertices of~$\btree$ are labeled by singletons of~$\ground$. We then identify a vertex of~$\btree$ with the element of~$\ground$ labeling~it.

The $\building$-nested sets and the $\building$-trees naturally encode a geometric representation of the $\building$-nested complex as a complete simplicial fan. In the next definition, we define~$\one_R \eqdef \sum_{r \in R} e_r$, for~$R \subseteq \ground$, and we denote by~$\bar\one_R$ the projection of~$\one_R$ to~$\HH$ orthogonal to~$\one_\ground$. Moreover, we consider~$\HH$ as a linear space.

\begin{definition}
\label{def:nestedFan}
The \defn{$\building$-nested fan} $\nestedFan(\building)$ is the complete simplicial fan of~$\HH \eqdef \Hyp(\ground)$ with a cone
\[
\normalCone(\nested) \eqdef \cone\set{\bar\one_N}{N \in \nested} = \set{\b{x} \in \HH}{x_r \le x_s \text{ for each path } r \to s \text{ in } \btree} \defeq \normalCone(\btree)
\]
for each $\building$-nested set~$\nested$ and~$\building$-tree~$\btree$ with~$\nested = \nested(\btree)$.
\end{definition}

The $\building$-nested fan is the normal fan of various deformed permutahedra (see Definition~\ref{def:deformedPermutahedron}). We want to underline two relevant examples:
\begin{enumerate}[(i)]
\item the deformed permutahedron~$\Defo(\b{z})$ with right hand side~$\b{z} \eqdef (z_R)_{R \subseteq \ground}$ defined by~${z_R = 3^{|R|-2}}$ if~$R \in \building$ and $z_R = \infty$ otherwise, see~\cite{Devadoss};
\item the Minkowski sum~$\Mink(\one_\building)$ of the faces of the standard simplex corresponding to all the elements of the building set~$\building$, see~\cite[Section~7]{Postnikov}.
\end{enumerate}
However, these two realizations are not always removahedra. In general, the support functions realizing the normal fan~$\nestedFan(\building)$ can be characterized by local conditions~\cite[Proposition~6.3]{Zelevinsky}, but it is difficult to use these conditions to characterize which nested complexes can be realized as removahedra. In this note, we adopt a different approach.

\subsection{Results}

The objective of this note is to discuss necessary and sufficient conditions for the $\building$-nested fan to be the normal fan of a removahedron (see Definition~\ref{def:removahedron}). We thus consider the removahedron~$\Remo(\building)$ described by the facet defining inequalities of the permutahedron~$\Perm(\ground)$ whose normal vectors are rays of the $\building$-nested fan, \ie
\[
\Remo(\building) \eqdef \HH \cap \bigcap_{B \in \building} \HS(B) = \biggset{\b{x} \in \HH}{\sum_{s \in B} x_s \ge \binom{|B|+1}{2} \text{ for all } B \in \building}.
\]

\begin{example}
\label{exm:remo}
Figure~\ref{fig:remo} represents the removahedra~$\Remo(\building\ex{0})$, $\Remo(\building\ex{1})$ and~$\Remo(\building\ex{2})$ corresponding to the graphical building sets~$\building\ex{0}$, $\building\ex{1}$ and~$\building\ex{2}$ of Example~\ref{exm:buildingSets}. Observe that~$\Remo(\building\ex{0})$ and~$\Remo(\building\ex{2})$ realize the corresponding nested complexes, whereas~$\Remo(\building\ex{1})$ is not even a simple polytope.

\begin{figure}[h]
  \capstart
  \centerline{\includegraphics[width=\textwidth]{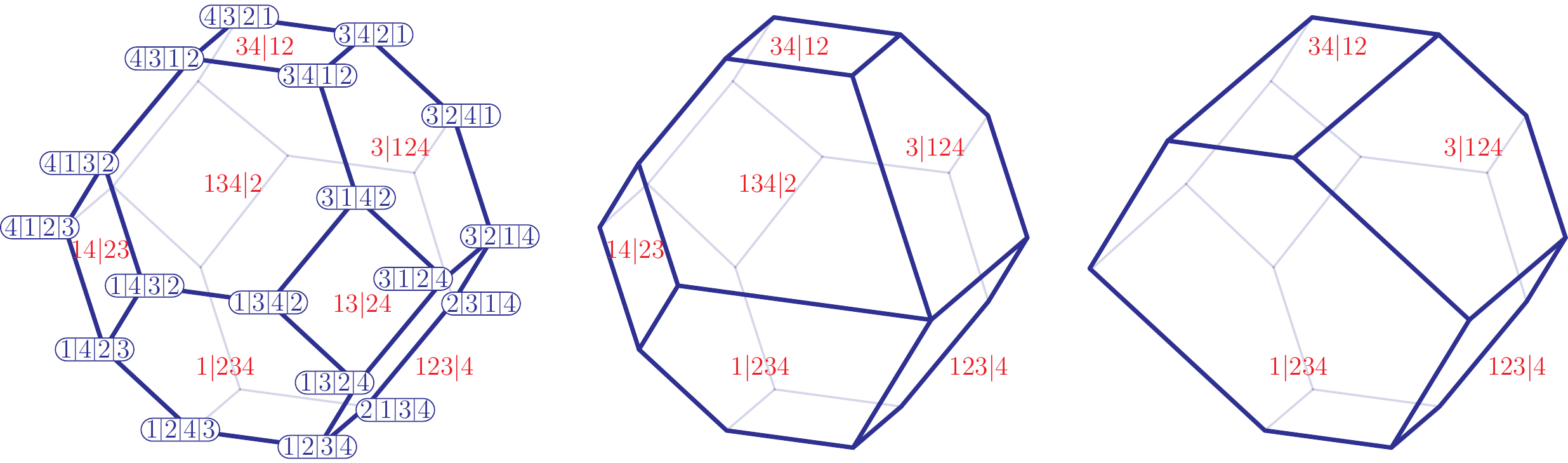}}
  \caption{The $3$-dimensional permutahedron~$\Perm([4]) = \Remo(\building\ex{0})$ and the removahedra~$\Remo(\building\ex{1})$ and~$\Remo(\building\ex{2})$.}
  \label{fig:remo}
\end{figure}
\end{example}

We want to understand when does $\Remo(\building)$ realize the nested complex~$\nestedComplex(\building)$. The following statement provides a general sufficient condition.

\begin{theorem}
\label{theo:main}
If~$\building$ is a connected building set closed under intersection, then the normal fan of the removahedron~$\Remo(\building)$ is the $\building$-nested fan~$\nestedFan(\building)$. In particular,~$\Remo(\building)$ is a simple polytope.
\end{theorem}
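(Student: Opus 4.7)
The strategy is to exhibit, for each maximal $\building$-tree $\btree$, a point $\point(\btree) \in \R^\ground$ which we show is a vertex of $\Remo(\building)$ with normal cone exactly $\normalCone(\btree)$. Since the full-dimensional cones $\normalCone(\btree)$ tile $\HH$ as $\btree$ ranges over all maximal $\building$-trees, this identifies the normal fan of $\Remo(\building)$ with $\nestedFan(\building)$ and automatically yields simplicity.

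In the spirit of Loday's formula for the associahedron, define
\[
\point(\btree)_s \eqdef \binom{|T_s|+1}{2} - \sum_{c \text{ child of } s \text{ in } \btree} \binom{|T_c|+1}{2}
\]
for each $s \in \ground$, where $T_v$ denotes the subtree of $\btree$ rooted at $v$. A direct telescoping argument gives $\sum_{s \in \descendants(v,\btree)} \point(\btree)_s = \binom{|T_v|+1}{2}$ for every vertex $v$ of $\btree$. Taking $v$ to be the root places $\point(\btree) \in \HH$, and taking any other $v$ shows that the facet inequality of $\Remo(\building)$ indexed by $N \in \nested(\btree)$ is tight at $\point(\btree)$.

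The heart of the argument is the strict inequality $\sum_{s \in B} \point(\btree)_s > \binom{|B|+1}{2}$ for every $B \in \building \setminus (\nested(\btree) \cup \{\ground\})$. First, a structural lemma valid for any building set: every $B \in \building$ admits a unique topmost vertex $r_B$ in $\btree$, so that $B \subseteq T_{r_B}$. Indeed, if $r_1, \dots, r_l$ ($l \ge 2$) were pairwise incomparable topmost candidates, then $B \subseteq \bigcup_i T_{r_i}$, and iterated application of axiom (B1) to $B$ and the $T_{r_i} \in \building$ would force $\bigcup_i T_{r_i} \in \building$, directly contradicting axiom (N2) of the $\building$-tree. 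Now let $V_1$ be the component of $\btree[B]$ containing $r_B$, let $d_1, \dots, d_k$ be the ``dangling'' children of $V_1$ in $\btree$ (\ie direct children in $\btree$ of vertices of $V_1$ that are not themselves in $V_1$), and set $B_{d_j} \eqdef B \cap T_{d_j}$, so that $B = V_1 \sqcup \bigsqcup_j B_{d_j}$. Here the hypothesis that $\building$ is closed under intersection enters crucially: each non-empty $B_{d_j}$ lies in $\building$, which permits induction on $|B|$ to yield $\sum_{s \in B_{d_j}} \point(\btree)_s \ge \binom{|B_{d_j}|+1}{2}$. Expanding the telescoping formula on $V_1$ and substituting these bounds, a routine binomial manipulation (parametrised by $v = |V_1|$, $m_j = |T_{d_j}|$, $b_j = |B_{d_j}|$, $\delta_j = m_j - b_j$) shows the slack $\sum_{s \in B} \point(\btree)_s - \binom{|B|+1}{2}$ is bounded below by $v \sum_j \delta_j$ plus explicitly non-negative mixed terms. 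Since $d_j \notin B$ forces $b_j < m_j$ whenever a dangling child exists, this slack is strictly positive unless there are no dangling children at all, in which case $B = T_{r_B}$ is a descendant set of $\btree$ and hence nested.

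Combining these facts, the tight facet inequalities at $\point(\btree)$ are exactly those indexed by $\nested(\btree) \cup \{\ground\}$, so the normal cone of $\Remo(\building)$ at $\point(\btree)$ is $\cone\{\bar\one_N : N \in \nested(\btree)\} = \normalCone(\btree)$. This cone is full-dimensional in $\HH$, confirming that $\point(\btree)$ is a vertex; since the $\normalCone(\btree)$'s tile $\HH$ as $\btree$ varies, they must coincide with the normal cones at all vertices of $\Remo(\building)$, and the normal fan of $\Remo(\building)$ thus equals $\nestedFan(\building)$. The main obstacle is the slack computation above: the uniqueness of the topmost vertex is purely combinatorial (no closure hypothesis is used), while closure under intersection enters in exactly one place, namely to keep each slice $B_{d_j}$ inside $\building$ so that the induction can run.
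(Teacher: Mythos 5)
Your proposal is correct, and it takes a genuinely different route from the paper. You start from the same point as the paper's first proof: your Loday-type coordinates $\point(\btree)_s = \binom{|T_s|+1}{2} - \sum_{c}\binom{|T_c|+1}{2}$ are exactly the path-counting coordinates of the paper (number of paths of $\btree$ with topmost vertex $s$), and your telescoping argument for tightness on the nested hyperplanes is equivalent to Lemma~\ref{lem:intersectionPoint}. From there the two arguments diverge. The paper never shows that the non-nested inequalities are strict at $\point(\btree)$; instead it invokes the polytopality criterion of Theorem~\ref{theo:HohlwegLangeThomas} and reduces everything to the local flip quantity $\Delta(\btree,\btree')$, whose positivity is checked via the condition $|R|\le 1$. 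You instead prove directly that for every $B\in\building\ssm(\nested(\btree)\cup\{\ground\})$ the inequality is strict, by locating the unique topmost vertex $r_B$ of $B$ (a correct use of (B1) against condition (2) of $\building$-trees, needing no closure hypothesis), decomposing $B=V_1\sqcup\bigsqcup_j B_{d_j}$ along the dangling children of the component $V_1\ni r_B$, and running an induction on $|B|$ that needs exactly the closure under intersection to keep each slice $B\cap T_{d_j}$ inside $\building$; your claimed slack bound checks out, since the slack is at least $v\sum_j\delta_j + \tfrac12\sum_j\delta_j\sum_{i\ne j}(m_i+b_i)$, which is strictly positive as soon as a dangling child exists, while no dangling child forces $B=T_{r_B}$. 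This is close in spirit to the strict-inequality proofs of \cite{LangePilaud,Pilaud} that the paper mentions but deliberately does not follow. What each approach buys: yours is self-contained (no external realization criterion), produces the vertex description of $\Remo(\building)$ with an explicit certificate that each candidate point lies in the polytope, and isolates precisely where closure under intersection is used; the paper's flip computation, being an exact local formula, additionally yields the characterization of Corollary~\ref{coro:characterization}, the refinement to exchangeable blocks in Theorem~\ref{theo:main2}, and the converse direction of Theorem~\ref{theo:chordful}, none of which your argument gives as stated. Two small points you should make explicit in a full write-up: $\Remo(\building)$ is bounded (immediate from the singleton inequalities together with the defining equality of $\HH$), so its normal fan is complete and your covering argument applies; and the identification of the normal cone at $\point(\btree)$ with the cone generated by the normals of the tight inequalities is the standard fact about (possibly redundant) inequality descriptions of polyhedra.
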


We provide two different complementary proofs of this result:
\begin{enumerate}[(i)]
\item In Section~\ref{sec:countingMaximalPaths}, we apply a result from~\cite{HohlwegLangeThomas} which characterizes the valid right hand sides to realize a complete simplicial fan as the normal fan of a convex polytope. For this, we first compute for each maximal $\building$-tree~$\btree$ the intersection point~$\point(\btree)$ of all facet defining hyperplanes of~$\Remo(\building)$ normal to the rays of the cone~$\normalCone(\btree)$. We then show that the vector joining the points~$\point(\btree)$ and~$\point(\btree')$ corresponding to two adjacent cones~$\normalCone(\btree)$ and~$\normalCone(\btree')$ points in the right direction for~$\Remo(\building)$ to realize the nested fan~$\nestedFan(\building)$.
\item In Section~\ref{sec:Minkowski}, we show that certain Minkowski sums of dilated faces of the standard simplex realize the $\building$-nested fan as soon as all $\building$-paths appear as summands. We then find the appropriate dilation factors for the resulting polytope to be a removahedron.
\end{enumerate}

\medskip
Relevant examples of application of Theorem~\ref{theo:main} arise from graphical building sets of chordful graphs. If we restrict to graphical building sets, Lemma~\ref{lem:sufficientGraphicalBuildingSets} shows that chordfulness of~$\graphG$ is also a necessary condition for the $\building\graphG$-nested fan~$\nestedFan(\building\graphG)$ to be the normal fan of~$\Remo(\building\graphG)$. We therefore obtain the following characterization of the graphical building sets whose nested fan is the normal fan of a removahedron. This characterization is illustrated by Example~\ref{exm:remo}.

\begin{theorem}
\label{theo:chordful}
The $\building\graphG$-nested fan~$\nestedFan(\building\graphG)$ is the normal fan of the removahedron~$\Remo(\building\graphG)$ if and only if the graph~$\graphG$ is chordful.
\end{theorem}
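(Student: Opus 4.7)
Sufficiency is immediate: if $\graphG$ is chordful then by Lemma~\ref{lem:graphicalBuildingSet} the building set~$\building\graphG$ is closed under intersection, and Theorem~\ref{theo:main} applied to the connected building set~$\building\graphG$ yields that the normal fan of~$\Remo(\building\graphG)$ is $\nestedFan(\building\graphG)$.

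For the converse I would argue the contrapositive. Assume $\graphG$ is not chordful, and let~$C = (s_i)_{i \in \Z_\ell}$ be a cycle of~$\graphG$ of minimal length among those whose vertex set does not induce a clique, so $\ell \ge 4$ and (by a standard minimality argument via four-cycle splitting) either $\ell = 4$ or $C$ is chordless. I then choose two sets $B_1, B_2 \in \building\graphG$ violating Condition~(N1) as follows: when~$\ell = 4$, following the proof of Lemma~\ref{lem:graphicalBuildingSet}, take the two cycle arcs joining the endpoints of a missing chord~$\{s_x, s_y\}$, so that $B_1 \cap B_2 = \{s_x, s_y\} \notin \building\graphG$; when~$\ell \ge 5$ and $C$ is chordless, take the complements $B_1 \eqdef V(C) \ssm \{s_x\}$ and $B_2 \eqdef V(C) \ssm \{s_y\}$ for any two non-adjacent cycle vertices, so that their intersection $V(C) \ssm \{s_x, s_y\}$ is disconnected in~$\graphG$ by chordlessness. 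In both cases $B_1, B_2 \in \building\graphG$ (as cycle paths), their intersection is not in~$\building\graphG$, and neither contains the other; Condition~(N1) therefore forbids $\{B_1, B_2\}$ from being a $\building\graphG$-nested set, so no cone of $\nestedFan(\building\graphG)$ contains both rays~$\bar\one_{B_1}$ and~$\bar\one_{B_2}$.

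The decisive step is to exhibit a point~$\point \in \Remo(\building\graphG) \cap \Hyp(B_1) \cap \Hyp(B_2)$: its presence forces the minimal face of~$\Remo(\building\graphG)$ containing~$\point$ to have a normal cone spanning both~$\bar\one_{B_1}$ and~$\bar\one_{B_2}$, contradicting the nested fan structure. The guiding observation is that the only permutahedral inequality obstructing simultaneous tightness of the $B_1$- and $B_2$-inequalities is the one indexed by $B_1 \cap B_2$, which is precisely the inequality absent from $\Remo(\building\graphG)$. Concretely, for $\ell = 4$ the candidate is the symmetric point $(1, 4, 1, 4)$ underlying the non-simplicity of~$\Remo(\building\ex{1})$ in Example~\ref{exm:remo}; for $\ell \ge 5$ the analogous assignment takes $x_{s_x} = x_{s_y} = \ell$ with the remaining $\ell - 2$ cycle coordinates summing to $\ell(\ell-3)/2$ and each at least~$1$ (feasible since $(\ell-1)(\ell-4) \ge 0$ for $\ell \ge 4$). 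The main obstacle is the inequality-by-inequality verification that this candidate actually belongs to~$\Remo(\building\graphG)$: the constraints indexed by building sets contained in~$V(C)$ reduce to the cycle computation just described, while the constraints involving vertices outside the cycle are absorbed by completing the coordinates with a permutation-like extension on $\ground \ssm V(C)$ offering sufficient slack.
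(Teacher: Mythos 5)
Your sufficiency argument is exactly the paper's (Lemma~\ref{lem:graphicalBuildingSet} combined with Theorem~\ref{theo:main}), and your strategy for the converse is sound in principle and genuinely different from the paper's: you want a \emph{primal} certificate of failure, namely a point of~$\Remo(\building\graphG)$ at which the inequalities of two non-nested building blocks~$B_1,B_2$ are simultaneously tight, forcing the rays~$\bar\one_{B_1}$ and~$\bar\one_{B_2}$ into a common cone of the normal fan, which is impossible in the nested fan. The paper instead produces a \emph{dual} certificate: it builds, from any cycle not inducing a clique, two adjacent maximal $\building\graphG$-trees~$\btree,\btree'$ with~$\Delta(\btree,\btree')\le 0$ and concludes by Corollary~\ref{coro:characterization}; that route needs no global feasibility check and no reduction to minimal cycles (your case analysis ``$\ell=4$ or chordless'' is extra work the paper avoids).

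The genuine gap is precisely the step you call the ``main obstacle'': you never establish that your candidate point lies in~$\Remo(\building\graphG)$, and as stated the candidate is not even adequate. For~$\ell\ge 5$, prescribing only that the~$\ell-2$ remaining cycle coordinates sum to~$\ell(\ell-3)/2$ and are each at least~$1$ does not imply the inequalities indexed by subpaths inside the two arcs between~$s_x$ and~$s_y$: two consecutive interior vertices both valued~$1$ already violate the edge inequality~$x+x'\ge 3$, so the ``cycle computation'' does not even settle the constraints supported on the cycle; one needs a genuine permutation-like assignment along each arc (and the aggregate slack is~$ab-1$ for arcs with~$a$ and~$b$ interior vertices, so there is little room). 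The interaction with the rest of the graph is likewise only asserted: connected sets meeting the cycle in its low-valued vertices and extending outside impose nontrivial lower bounds on the outside coordinates, and the claimed ``permutation-like extension offering sufficient slack'' must be exhibited and verified against all such mixed building blocks (this can be done, \eg cycle values~$1,4,1,4$ and outside values~$5,\dots,|\ground|$ in the $\ell=4$ case, but it is a real inequality-by-inequality argument, not a remark). Note also that the paper's computation of~$\Delta(\btree,\btree')\le 0$ does not hand you such a point for free: when~$\Delta<0$ the relevant vertex candidates~$\point(\btree)$ need not belong to the removahedron, so the existence of your point requires its own proof. As it stands, the converse direction is a plausible program rather than a proof.
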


\begin{example}
Specific families of chordful graphs provide relevant examples of graph associahedra realized by removahedra, \eg:
\begin{itemize}
\item the path associahedra, \aka classical associahedra~\cite{Loday},
\item the star associahedra, \aka stellohedra~\cite{PostnikovReinerWilliams},
\item the tree associahedra~\cite{Pilaud},
\item the complete graph associahedra, \aka classical permutahedra.
\end{itemize}
We note however that the cycle associahedra, \aka cyclohedra, cannot be realized by removahedra.
\end{example}

To conclude, we observe that a general building set~$\building$ does not need to be closed under intersection for the removahedron~$\Remo(\building)$ to realize the $\building$-nested fan~$\fan(\building)$. In fact, our first proof of Theorem~\ref{theo:main} shows the following refinement. We say that two building blocks~$B, B' \in \building$ are \defn{exchangeable} if there exists two maximal $\building$-nested sets~$\nested, \nested'$ such that~$\nested \ssm \{B\} = \nested' \ssm \{B'\}$.

\begin{theorem}
\label{theo:main2}
If the intersection of any two exchangeable building blocks of~$\building$ also belongs to~$\building$, then the normal fan of the removahedron~$\Remo(\building)$ is the $\building$-nested fan~$\nestedFan(\building)$.
\end{theorem}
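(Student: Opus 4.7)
The plan is to revisit the first proof of Theorem~\ref{theo:main}, presented in Section~\ref{sec:countingMaximalPaths}, and to observe that the closure under intersection hypothesis enters the argument only through pairs of \emph{exchangeable} building blocks. That proof invokes the criterion of~\cite{HohlwegLangeThomas}: a complete simplicial fan is the normal fan of a convex polytope with prescribed right-hand sides on its rays if and only if, for every pair of adjacent maximal cones sharing a wall, the unique (up to scaling) linear dependence among their rays yields a strict sign condition on the corresponding right-hand sides. Applied to~$\Remo(\building)$, where the right-hand side attached to the ray~$\bar\one_B$ is~$\binom{|B|+1}{2}$, this amounts to one scalar inequality per interior wall of the $\building$-nested fan~$\nestedFan(\building)$.

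The key observation is that walls of~$\nestedFan(\building)$ are in bijection with exchangeable pairs. Indeed, two maximal cones~$\normalCone(\btree)$ and~$\normalCone(\btree')$ are adjacent if and only if~$\nested(\btree) \ssm \{B\} = \nested(\btree') \ssm \{B'\}$ for some blocks~$B, B' \in \building$, which is precisely the definition of exchangeability. The Hohlweg--Lange--Thomas criterion therefore reduces to one positivity check per exchangeable pair~$(B,B')$, and no non-exchangeable pair of building blocks ever appears in the verification.

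I would then reuse the explicit computation from the proof of Theorem~\ref{theo:main}: express the intersection points~$\point(\btree)$ and~$\point(\btree')$ of the facet hyperplanes of~$\Remo(\building)$ associated to the rays of~$\normalCone(\btree)$ and~$\normalCone(\btree')$, form the difference~$\point(\btree') - \point(\btree)$, and evaluate it against the normal to the shared wall. In this evaluation the set~$B \cap B'$ naturally surfaces, and the argument requires~$B \cap B' \in \building$ (so that the summand attached to~$B \cap B'$ is actually present in the removahedron description) to conclude the strict inequality. Under the hypothesis of Theorem~\ref{theo:main2}, this is guaranteed precisely for every exchangeable pair, which is all that is needed. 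The main subtlety to watch for is therefore to confirm that the positivity check on a given wall is genuinely local, depending only on~$B$, $B'$ and~$B \cap B'$ and not on any other block of~$\building$; granting this locality, the argument of Theorem~\ref{theo:main} applies verbatim and yields Theorem~\ref{theo:main2}.
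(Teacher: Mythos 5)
Your overall route is the paper's own: apply the Hohlweg--Lange--Thomas criterion to the intersection points~$\point(\btree)$, observe that two maximal cones~$\normalCone(\btree)$, $\normalCone(\btree')$ of~$\nestedFan(\building)$ are adjacent exactly when~$\nested(\btree) \ssm \{B\} = \nested(\btree') \ssm \{B'\}$ for some~$B = \descendants(s,\btree)$, $B' = \descendants(s',\btree')$ (so every wall check concerns an exchangeable pair; strictly this is not a bijection, since one exchangeable pair may label many walls, but only the direction you need is relevant), and note that the positivity check~$\Delta(\btree,\btree') > 0$ of Corollary~\ref{coro:characterization} only ever invokes the intersection of the two exchanged blocks. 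This is precisely how the paper deduces Theorem~\ref{theo:main2} from Lemmas~\ref{lem:diff} and~\ref{lem:IntersectionImpliesRemovahedra}, so as a plan that reuses those computations as black boxes, your argument goes through.

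The one genuine misstep is your explanation of \emph{why}~$B \cap B' \in \building$ forces the strict inequality. It is not that ``the summand attached to~$B \cap B'$ is actually present in the removahedron description'': the inequality (or Minkowski summand) indexed by~$B \cap B'$ never enters the wall-crossing computation, which involves only the hyperplanes~$\Hyp(\descendants(\cdot,\btree))$ attached to the rays of the two adjacent cones; moreover, adding an inequality to~$\Remo(\building)$ could only cut the polytope further, never help it realize the fan. The actual mechanism is combinatorial: with the notation of Lemma~\ref{lem:diff}, one has~$B \cap B' = \descendants(s,\btree) \cap \descendants(s',\btree') = \bigcup_{r \in R} \descendants(r)$, a disjoint union of elements of~$\building$ indexed by the children~$R$ that switch parents in the flip; if this union lies in~$\building \cup \{\varnothing\}$, Condition~(N2) of Definition~\ref{def:nested} forces~$|R| \le 1$, hence~$\pi_R = 0$ and~$\Delta(\btree,\btree') = (\delta_S+1)(\delta_{S'}+1) + \delta_{R'}(\delta_S+\delta_{S'}+\delta_R+2) + \pi_{R'} > 0$. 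Relatedly, your ``locality'' claim should be phrased with care: $\Delta(\btree,\btree')$ itself depends on all the children~$S, S', R, R'$ and their descendant sets, not only on~$B$, $B'$ and~$B \cap B'$; what is local is the sufficient condition, namely that~$B \cap B' \in \building \cup \{\varnothing\}$ alone guarantees~$\pi_R = 0$ and hence positivity regardless of the remaining data. If you justify the key step this way rather than via your parenthetical, the proof is complete.
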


\enlargethispage{.1cm}
This result is illustrated by the building set~$\building\ex{3}$ of Example~\ref{exm:buildingSets} and its removahedron~$\Remo(\building\ex{3})$ represented in Figure~\ref{fig:strange}. However, the condition of Theorem~\ref{theo:main2} is still not necessary for the removahedron~$\Remo(\building)$ to realize the nested complex~$\nestedComplex(\building)$. For example, we invite the reader to check that the removahedron~$\Remo(\building\ex{4})$ of the building set~$\building\ex{4}$ of Example~\ref{exm:buildingSets} realizes the corresponding nested complex, even if~$\{1,2,3,4\} \cap \{3,4,5\} = \{3,4\} \notin \building\ex{4}$ while~$\{1,2,3,4\}$ and~$\{3,4,5\}$ are exchangeable. Corollary~\ref{coro:characterization} gives a necessary and sufficient, thought unpractical, condition for the removahedron~$\Remo(\building)$ of an arbitrary building set~$\building$ to realize the nested~fan~$\nestedFan(\building)$.

\begin{figure}[h]
  \capstart
  \centerline{\includegraphics[width=.8\textwidth]{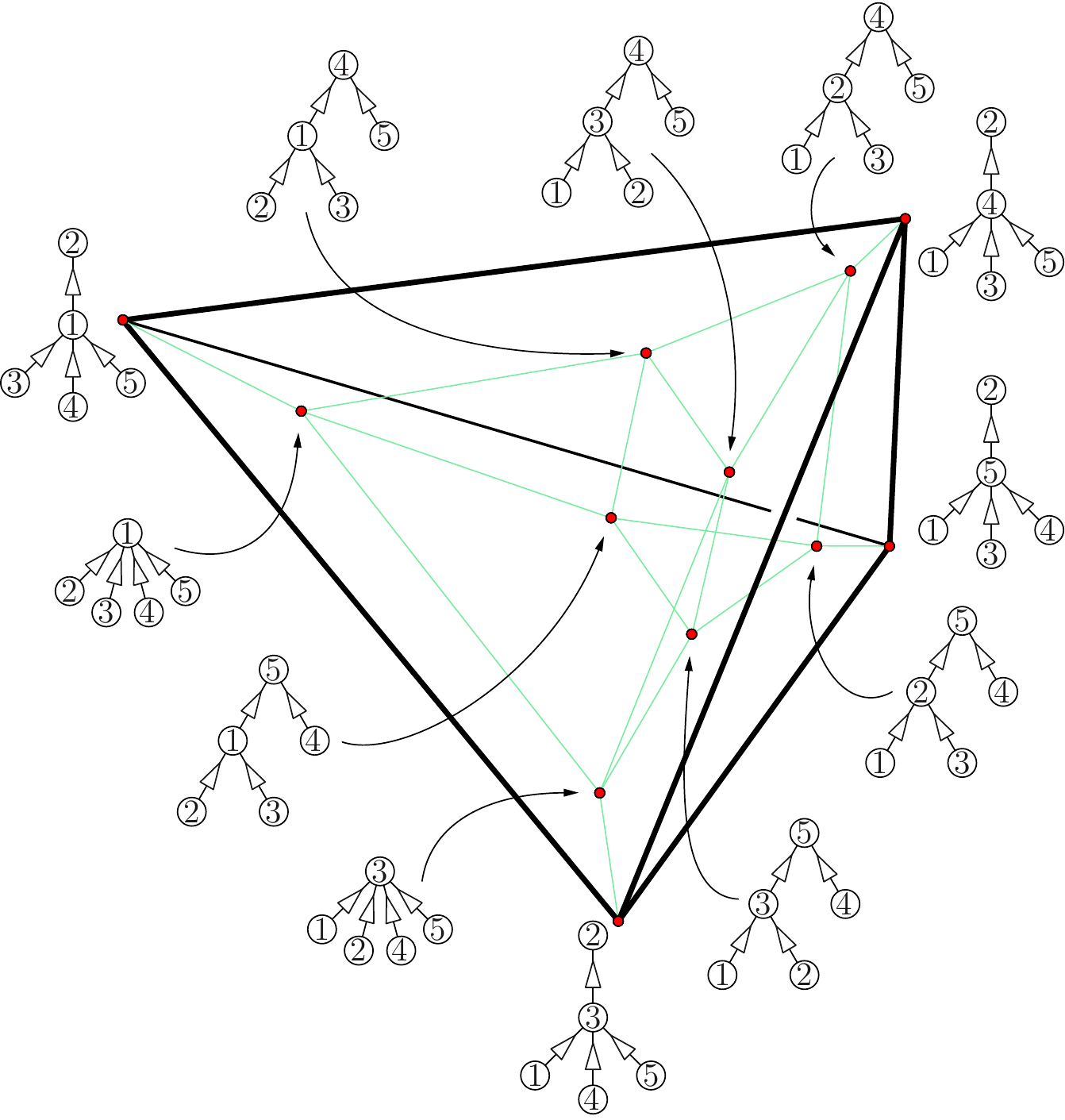}}
  \caption{The $4$-dimensional removahedron~$\Remo(\building\ex{3})$ realizes the $\building\ex{3}$-nested fan, although~$\building\ex{3}$ is not closed under intersection.}
  \label{fig:strange}
\end{figure}


\section{Counting paths in maximal $\building$-trees}
\label{sec:countingMaximalPaths}

Our first approach to Theorem~\ref{theo:main} is the following characterization of the valid right hand sides to realize a complete simplicial fan as the normal fan of a convex polytope. A proof of this statement can be found \eg in~\cite[Theorem~4.1]{HohlwegLangeThomas}.

\begin{theorem}[\protect{\cite[Theorem~4.1]{HohlwegLangeThomas}}]
\label{theo:HohlwegLangeThomas}
Given a complete simplicial fan~$\fan$ in~$\R^d$, consider for each ray~$\rho$ of~$\fan$ a half-space~$\HS_\rho$ of~$\R^d$ containing the origin and defined by a hyperplane~$\Hyp_\rho$ orthogonal to~$\rho$. For each maximal cone~$C$ of~$\fan$, let~$\point(C) \in \R^d$ be the intersection of the hyperplanes~$\Hyp_\rho$ for~$\rho \in C$. Then the following assertions are equivalent:
\begin{enumerate}[(i)]
\item The vector~$\point(C') - \point(C)$ points from~$C$ to~$C'$ for any two adjacent maximal cones~$C$, $C'$ of~$\fan$.
\item The polytopes
\[
\conv\set{\point(C)}{C \text{ maximal cone of } \fan} \quad\text{ and }\quad
\bigcap_{\rho \text{ ray of } \fan} \HS_\rho
\]
coincide and their normal fan is~$\fan$.

\end{enumerate}
\end{theorem}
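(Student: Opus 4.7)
The plan is to focus on the nontrivial direction (i) $\Rightarrow$ (ii), since the converse is standard polytope duality. I would first analyze the wall-crossing geometry: when $C$ and $C'$ are adjacent maximal cones sharing $d-1$ rays, both $\point(C)$ and $\point(C')$ lie on the one-dimensional intersection $\bigcap_{\rho \in C \cap C'} \Hyp_\rho$, which is perpendicular to the wall spanned by these common rays; hypothesis (i) then pins down the orientation of $\point(C') - \point(C)$ along this line. Decomposing any $v$ in the interior of $C$ into components parallel and perpendicular to the wall, where the perpendicular component lies on the opposite side from $C'$, yields $\dotprod{v}{\point(C) - \point(C')} \ge 0$.

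I would then globalize as follows. Set $Q \eqdef \conv\set{\point(C)}{C \text{ maximal cone of } \fan}$, fix $v$ in the interior of some maximal cone, and let $\point(C^*)$ maximize $\dotprod{v}{\cdot}$ over the finite set $\set{\point(C)}{C \text{ maximal}}$. In particular $\dotprod{v}{\point(C^*) - \point(C')} \ge 0$ for every $C'$ adjacent to $C^*$, which by the wall-crossing analysis (read in reverse) forces $v$ to lie on the $C^*$-side of every wall of $C^*$, i.e., $v \in C^*$. Thus each direction $v$ singles out the unique maximal cone containing it, each $\point(C)$ is an extreme point of $Q$ whose normal cone is exactly $C$, and the normal fan of $Q$ coincides with $\fan$.

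Finally, I would identify $Q$ with $P \eqdef \bigcap_\rho \HS_\rho$. Since $\fan$ is simplicial, $Q$ is simple, and its facets are in bijection with the rays of $\fan$; the facet of $Q$ dual to a ray $\rho$ is supported by a hyperplane perpendicular to $\rho$ and containing $\point(C)$ for every maximal cone $C$ with $\rho \in C$. There is a unique such hyperplane, namely $\Hyp_\rho$, so every facet of $Q$ lies on some $\Hyp_\rho$, giving $Q = \bigcap_\rho \HS_\rho = P$. The hard part is the second paragraph: once the wall-crossing inequality is available the global conclusion falls out cleanly, but it requires both the completeness of $\fan$ (so every $v$ lies in some maximal cone) and its simpliciality (so that local optimality at a vertex is controlled by the rays of a single cone).
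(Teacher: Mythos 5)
Your proposal cannot be compared line-by-line with anything in the paper, because the paper does not prove this statement at all: it is imported verbatim from Hohlweg--Lange--Thomas, with only the pointer ``a proof of this statement can be found in \cite[Theorem~4.1]{HohlwegLangeThomas}''. On its own merits, your argument is sound and is essentially the known proof of that theorem: the wall-crossing observation that $\point(C)$ and $\point(C')$ both lie on the line $\bigcap_{\rho \in C \cap C'} \Hyp_\rho$, whose direction is the normal of the wall, so that (i) translates into the local inequality $\dotprod{v}{\point(C)-\point(C')} > 0$ for $v$ interior to $C$; then the global step, where a maximizer $\point(C^*)$ of a generic direction $v$ over the finite point set must satisfy the reverse inequalities against all its neighbours, and since a full-dimensional simplicial cone of a complete fan is the intersection of the half-spaces bounded by its wall hyperplanes, this forces $v \in C^*$, hence $C^* = C_v$. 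Two points deserve an explicit sentence in a full write-up. First, the jump from ``the interior of $C$ consists of directions uniquely maximized at $\point(C)$'' to ``the normal cone of $\point(C)$ is exactly $C$'' uses that these interiors cover a dense subset of $\R^d$ and that normal cones of distinct vertices have disjoint interiors; as stated you only get one inclusion directly. Second, in the final identification $Q = \bigcap_\rho \HS_\rho$ you verify that every facet of $Q$ lies on some $\Hyp_\rho$, but you must also check that $Q$ sits on the correct side of each $\Hyp_\rho$; this is where the normalization of the half-spaces enters (the intended reading is that $\rho$ is the outward normal of $\HS_\rho$, matching the orientation ``from $C$ to $C'$'' in (i)), since with an arbitrary choice of sides the two conditions are not equivalent. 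Finally, the direction (ii) $\Rightarrow$ (i) that you wave off as duality does need the small observation that, when the normal fan of $\bigcap_\rho \HS_\rho$ is $\fan$, its vertex with normal cone $C$ lies on all $\Hyp_\rho$ for $\rho \in C$ and hence equals $\point(C)$, after which the edge-direction property is indeed standard.
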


Since we are given a complete simplicial fan~$\nestedFan(\building)$ and we want to prescribe the right hand sides of the inequalities to describe a polytope realizing it, we are precisely in the situation of Theorem~\ref{theo:HohlwegLangeThomas}. Our first step is to compute the intersection points of the hyperplanes normal to the rays of a maximal cone of~$\nestedFan(\building)$. We associate to any maximal $\building$-tree~$\btree$ a point~$\point(\btree) \in \R^\ground$ whose coordinate~$\point(\btree)_s$ is defined as the number of paths~$\pi$ in~$\btree$ such that~$s$ is the topmost vertex~$\topmost(\pi)$ of~$\pi$ in~$\btree$. Note that all coordinates of~$\point(\btree)$ are strictly positive integers since we always count the trivial path reduced to the vertex~$s$ of~$\btree$. The following lemma ensures that the point~$\point(\btree)$ lies on all hyperplanes of~$\Remo(\building)$ normal to the rays of the cone~$\normalCone(\btree)$.

\begin{lemma}
\label{lem:intersectionPoint}
For any maximal $\building$-tree~$\btree$ and any element~$s \in \ground$, the point~$\point(\btree)$ lies on the hyperplane~$\Hyp(\descendants(s, \btree))$.
\end{lemma}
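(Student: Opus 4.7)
The plan is to unwind the definitions and translate the claim into a counting statement about paths in a rooted tree. Set $D \eqdef \descendants(s,\btree)$ and $d \eqdef |D|$. Since $\Hyp(R)$ is defined by $\sum_{r \in R} x_r = \binom{|R|+1}{2}$, the goal reduces to verifying
\[
\sum_{r \in D} \point(\btree)_r = \binom{d+1}{2}.
\]

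Next I would interpret the left-hand side combinatorially. By definition, $\point(\btree)_r$ counts paths~$\bpath$ in~$\btree$ whose topmost vertex~$\topmost(\bpath)$ equals~$r$. Summing over $r \in D$ therefore counts the paths in~$\btree$ whose topmost vertex lies in~$D$.

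The central (and really only) observation is a clean identification of this family of paths. In the rooted tree~$\btree$, the topmost vertex of a path is the least common ancestor of its two endpoints. Because $D$ consists of $s$ together with all descendants of $s$, the LCA of two vertices $u,v$ belongs to $D$ if and only if both $u$ and $v$ belong to $D$; equivalently, the path from $u$ to~$v$ stays inside the subtree~$\btree|_D$ of~$\btree$ induced on~$D$. (One direction is immediate since ancestors of vertices of $D$ other than those of $s$ lie outside $D$; the other direction follows because the LCA of two descendants of $s$ is itself a descendant of $s$.) Hence the sum counts exactly the paths of~$\btree|_D$, including the $d$ trivial single-vertex paths which are always included by the definition of $\point(\btree)$.

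Finally I would count: in any tree on $d$ vertices there is a unique path between any two distinct vertices, giving $\binom{d}{2}$ non-trivial paths, plus the $d$ trivial paths, for a total of $\binom{d}{2} + d = \binom{d+1}{2}$ paths. This matches the required right-hand side, proving~$\point(\btree) \in \Hyp(D)$. The only mildly delicate point is the characterization of paths by their topmost vertex, but this is immediate from the rooted-tree structure; there is no real obstacle in this lemma, and no use of the hypothesis that $\building$ is closed under intersection is needed here (that will enter only later, when comparing $\point(\btree)$ and $\point(\btree')$ for adjacent maximal trees).
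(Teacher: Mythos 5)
Your proof is correct and follows essentially the same route as the paper: reduce to the equality $\sum_{r \in \descendants(s,\btree)} \point(\btree)_r = \binom{|\descendants(s,\btree)|+1}{2}$, observe that the topmost vertex of a path lies in $\descendants(s,\btree)$ exactly when both endpoints do, and count the $\binom{d}{2}+d$ resulting paths (pairs of endpoints with repetition). The paper's argument is the same, just phrased as a double sum over paths with indicator functions.
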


The proof of this lemma is inspired from similar statements in~\cite[Proposition~6]{LangePilaud} and~\cite[Proposition~60]{Pilaud}. Although the latter covers the present result, we provide a simpler and self-contained proof for the convenience of the reader.

\begin{proof}[Proof of Lemma~\ref{lem:intersectionPoint}]
Consider a $\building$-tree~$\btree$, and let~$\Pi$ be the set of all paths in~$\btree$. For any~$\pi \in \Pi$, the topmost vertex~$\topmost(\pi)$ of~$\pi$ in~$\btree$ is a descendant of~$s$ in~$\btree$ if and only if both endpoints of~$\pi$ are descendants of~$s$ in~$\btree$. It follows that 
\[
\sum_{r \in \descendants(s, \btree)} \point(\btree)_r = \sum_{r \in \descendants(s, \btree)} \sum_{\pi \in \Pi} \one_{\topmost(\pi) \, = \, r} = \sum_{\pi \in \Pi} \one_{\topmost(\pi) \, \in \, \descendants(s, \btree)} = \binom{|\descendants(s, \btree)|+1}{2}
\]
since the number of paths~$\pi \in \Pi$ such that~$\topmost(\pi) \in \descendants(s, \btree)$ is just the number of pairs of endpoints in~$\descendants(s, \btree)$, with possible repetition. We therefore have~$\point(\btree) \in \Hyp(\descendants(s, \btree))$.
\end{proof}

Guided by Theorem~\ref{theo:HohlwegLangeThomas}, we now compute the difference~$\point(\btree') - \point(\btree)$ for two adjacent maximal $\building$-trees~$\btree$ and~$\btree'$. Let~$s,s' \in \ground$ be such that the cones~$\normalCone(\btree)$ and~$\normalCone(\btree')$ are separated by the hyperplane of equation~$x_s = x_{s'}$, and moreover~$x_s \le x_{s'}$ in~$\normalCone(\btree)$ while~$x_s \ge x_{s'}$ in~$\normalCone(\btree')$. Let~$\bar\btree$ denote the tree obtained by contracting the arc~$s \to s'$ in~$\btree$ or, equivalently, the arc~$s' \to s$ in~$\btree'$. Since both~$\btree$ and~$\btree'$ contract to~$\bar\btree$, the children of the node of~$\bar\btree$ labeled by~$\{s,s'\}$ are all children of~$s$ or~$s'$ in both~$\btree$ and~$\btree'$. We denote by~$S$ (resp.~by~$S'$) the elements of~$\ground$ which are children of~$s$ (resp.~of~$s'$) in both~$\btree$ and~$\btree'$. In contrast, we let~$R$ denote the elements of~$\ground$ which are children of~$s$ in~$\btree$ and of~$s'$ in~$\btree'$, and~$R'$ those which are children of~$s'$ in~$\btree$ and of~$s$ in~$\btree'$. These notations are summarized on \fref{fig:flip}. For~$r \in S \cup S' \cup R \cup R'$, we denote the set of descendants of~$r$ by~$\descendants(r) \eqdef \descendants(r, \btree) = \descendants(r, \btree') = \descendants(r, \bar\btree)$. 

\begin{figure}[h]
  \capstart
  \centerline{\includegraphics[scale=1]{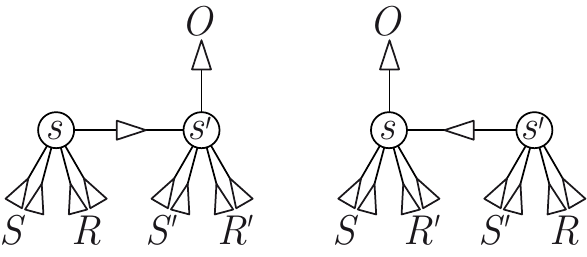}}
  \caption{Two adjacent maximal $\building$-trees~$\btree$ (left) and~$\btree'$ (right).}
  \label{fig:flip}
\end{figure}

\begin{lemma}
\label{lem:diff}
Let~$\building$ be a building set and $\btree$, $\btree'$ be two adjacent maximal $\building$-trees. Using the notation just introduced,  we set
\[
\delta_X \eqdef \sum_{x \in X} |\descendants(x)|
\qquad\text{and}\qquad
\pi_X \eqdef \sum_{x \ne x' \in X} |\descendants(x)| \cdot |\descendants(x')|
\]
for~$X \in \{S,S',R,R'\}$. Then the difference~$\point(\btree') - \point(\btree)$ is given by
\[
\point(\btree') - \point(\btree) = \Delta(\btree, \btree') \cdot (e_s - e_{s'}),
\]
where the coefficient~$\Delta(\btree, \btree')$ is defined by
\[
\Delta(\btree, \btree') \eqdef (\delta_S+1)(\delta_{S'}+1) +\delta_{R'}(\delta_S+\delta_{S'}+\delta_R+2) + \pi_{R'} - \pi_R.
\]
\end{lemma}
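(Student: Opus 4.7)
The plan naturally splits into two steps. First I will show that the difference $\point(\btree') - \point(\btree)$ is supported on the coordinates indexed by $s$ and $s'$, so it must be a scalar multiple of $e_s - e_{s'}$; then I will compute that scalar by an explicit binomial expansion.

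For the first step I will argue that $\point(\btree)_t = \point(\btree')_t$ for every $t \in \ground \ssm \{s,s'\}$. The point is that $\btree$ and $\btree'$ agree outside the local flip region, and paths with topmost vertex $t$ are determined entirely by the subtree structure in that region. More precisely, if $t$ lies in the subtree rooted at some $x \in S \cup S' \cup R \cup R'$, then this subtree is identical in $\btree$ and $\btree'$; since paths with topmost $t$ have both endpoints in $\descendants(t) \subseteq \descendants(x)$, the count is unchanged. If instead $t$ is an ancestor of $s$ in $\btree$ --- equivalently, an ancestor of $\{s,s'\}$ in $\bar\btree$ --- then $\descendants(t,\btree) = \descendants(t,\btree')$, and the partition of these descendants into subtrees rooted at children of $t$ coincides in the two trees, because the unique child-subtree of $t$ containing the flip region has the same vertex set $\{s,s'\} \cup \bigcup_{x \in S \cup S' \cup R \cup R'} \descendants(x)$ in both. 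Combined with the fact that $\sum_{t \in \ground} \point(\btree)_t = \binom{|\ground|+1}{2}$ is independent of~$\btree$, this forces $\point(\btree')_s - \point(\btree)_s = -(\point(\btree')_{s'} - \point(\btree)_{s'})$, so $\point(\btree') - \point(\btree)$ is indeed a multiple of $e_s - e_{s'}$.

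For the second step the key tool is the identity
\[
\point(\btree)_t \;=\; \binom{|\descendants(t,\btree)|+1}{2} \;-\; \sum_{c \text{ child of } t \text{ in } \btree} \binom{|\descendants(c,\btree)|+1}{2},
\]
which records that paths with topmost $t$ are exactly those whose two endpoints lie in $\descendants(t,\btree)$ but not both in a single child-subtree of~$t$ --- the same counting argument used in the proof of Lemma~\ref{lem:intersectionPoint}. Applying this to $t=s$ in both $\btree$ and $\btree'$ and taking the difference, the contributions indexed by $x \in S$ cancel, leaving an expression that involves only the three binomials $\binom{|\descendants(s,\btree)|+1}{2}$, $\binom{|\descendants(s',\btree)|+1}{2}$, $\binom{|\descendants(s,\btree')|+1}{2}$ together with the two sums $\sum_{x \in R} \binom{d_x+1}{2}$ and $\sum_{x \in R'} \binom{d_x+1}{2}$, where $d_x \eqdef |\descendants(x)|$.

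Finally, substituting $|\descendants(s,\btree)| = 2 + \delta_S + \delta_{S'} + \delta_R + \delta_{R'}$, $|\descendants(s',\btree)| = 1 + \delta_{S'} + \delta_{R'}$ and $|\descendants(s,\btree')| = 1 + \delta_S + \delta_{R'}$, together with the elementary identity $\sum_{x \in X} \binom{d_x + 1}{2} = \tfrac{1}{2}(\delta_X + \delta_X^2 - \pi_X)$ (which follows from $\delta_X^2 = \sum_{x \in X} d_x^2 + \pi_X$), the right-hand side expands into a polynomial in the six quantities $\delta_S, \delta_{S'}, \delta_R, \delta_{R'}, \pi_R, \pi_{R'}$. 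The main obstacle is the ensuing algebraic bookkeeping: the quadratic monomials $\delta_X^2$ and a great many cross-terms must cancel, and what remains has to be regrouped into the claimed expression $(\delta_S+1)(\delta_{S'}+1) + \delta_{R'}(\delta_S + \delta_{S'} + \delta_R + 2) + \pi_{R'} - \pi_R$. This is conceptually routine but combinatorially delicate, and is where I expect to spend the bulk of the effort.
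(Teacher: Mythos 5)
Your overall strategy is sound and close to the paper's: the reduction to a multiple of $e_s-e_{s'}$ (the paper just computes all four coordinates $\point(\btree)_s,\point(\btree)_{s'},\point(\btree')_s,\point(\btree')_{s'}$ explicitly, while your use of the constant total sum $\binom{|\ground|+1}{2}$ is a legitimate shortcut), and the evaluation via the path count $\point(\btree)_t=\binom{|\descendants(t,\btree)|+1}{2}-\sum_{c}\binom{|\descendants(c,\btree)|+1}{2}$ is exactly the counting behind Lemma~\ref{lem:intersectionPoint}. The genuine problem is that your second step is set up with the flip oriented the wrong way, and with that orientation the computation cannot produce the stated $\Delta(\btree,\btree')$. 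Your substitutions $|\descendants(s,\btree)|=2+\delta_S+\delta_{S'}+\delta_R+\delta_{R'}$, $|\descendants(s',\btree)|=1+\delta_{S'}+\delta_{R'}$, $|\descendants(s,\btree')|=1+\delta_S+\delta_{R'}$ say that $s$ is the parent of $s'$ in $\btree$ and $s'$ the parent of $s$ in $\btree'$. The formula being proved requires the opposite: in $\btree$ the children of $s$ are exactly $S\cup R$ and $s'$ is its parent, while in $\btree'$ the children of $s'$ are $S'\cup R$ and $s$ is its parent; this orientation is also what the later applications force (e.g.\ in Lemma~\ref{lem:IntersectionImpliesRemovahedra} one needs $\descendants(s,\btree)\cap\descendants(s',\btree')=\bigcup_{r\in R}\descendants(r)$, which is false under your reading), even though the paper's prose about the arc $s\to s'$ and about $x_s\le x_{s'}$ in $\normalCone(\btree)$ can admittedly be read your way. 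Concretely, carrying out your own expansion with your sizes gives
\[
\point(\btree')_s-\point(\btree)_s=-\Big((\delta_S+1)(\delta_{S'}+1)+\delta_R(\delta_S+\delta_{S'}+\delta_{R'}+2)+\pi_R-\pi_{R'}\Big),
\]
i.e.\ the claimed coefficient with $R$ and $R'$ interchanged and the sign reversed; since $\Delta$ is not symmetric in $R,R'$, the ``delicate bookkeeping'' you defer cannot regroup into the stated expression. The fix is simply to swap the roles: use $|\descendants(s,\btree)|=1+\delta_S+\delta_R$, $|\descendants(s,\btree')|=2+\delta_S+\delta_{S'}+\delta_R+\delta_{R'}$, $|\descendants(s',\btree')|=1+\delta_{S'}+\delta_R$, subtracting the child contributions $S\cup R$ in $\btree$ and $S\cup R'\cup\{s'\}$ in $\btree'$; then your method does land on the claimed $\Delta$.

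A smaller point: you invoke $\delta_X^2=\sum_{x\in X}d_x^2+\pi_X$, which treats $\pi_X$ as a sum over \emph{ordered} pairs, whereas the $\pi_X$ appearing in $\Delta$ (and in the paper's computations) is the sum over unordered pairs. You should use $\delta_X^2=\sum_{x\in X}d_x^2+2\pi_X$, hence $\sum_{x\in X}\binom{d_x+1}{2}=\tfrac12(\delta_X+\delta_X^2)-\pi_X$; otherwise your $\pi$-terms come out off by a factor of~$2$.
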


\begin{proof}
By definition of the coordinates of~$\point(\btree)$, we compute
\begin{align*}
\point(\btree)_s &= (\delta_S+1)(\delta_R+1) + \pi_S + \pi_R,\\
\point(\btree)_{s'} &= (\delta_{S'}+1)(\delta_{R'}+1) + (\delta_{S'}+\delta_{R'})(\delta_{R}+\delta_{S}+1) + 1 + \delta_S + \delta_R + \pi_{S'} + \pi_{R'},
\end{align*}
and
\begin{align*}
\point(\btree')_s &= (\delta_{S}+1)(\delta_{R'}+1) + (\delta_{S}+\delta_{R'})(\delta_{R}+\delta_{S'}+1) + 1 + \delta_{S'} + \delta_R + \pi_{S} + \pi_{R'},\\
\point(\btree')_{s'} &= (\delta_{S'}+1)(\delta_R+1) + \pi_{S'} + \pi_R.
\end{align*}
Moreover, the coordinates~$\point(\btree)_r$ and~$\point(\btree')_r$ coincide if~$r \in \ground \ssm \{s,s'\}$ since the flip from~$\btree$ to~$\btree'$ did not affect the children of the node~$r$. The result immediately follows.
\end{proof}

Combining Theorem~\ref{theo:HohlwegLangeThomas} and Lemmas~\ref{lem:intersectionPoint} and~\ref{lem:diff}, we thus obtain the following characterization.

\begin{corollary}
\label{coro:characterization}
The $\building$-nested fan~$\nestedFan(\building)$ is the normal fan of the removahedron~$\Remo(\building)$ if and only if $\Delta(\btree, \btree') > 0$ for any pair of adjacent maximal $\building$-trees~$\btree, \btree'$.
\end{corollary}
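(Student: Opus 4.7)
The plan is to apply Theorem~\ref{theo:HohlwegLangeThomas} essentially verbatim to the simplicial fan $\nestedFan(\building)$, taking for each ray $\bar\one_B$ (equivalently, for each $B \in \building \ssm \{\ground\}$) the half-space $\HS_{\bar\one_B} \eqdef \HS(B)$ appearing in the definition of $\Remo(\building)$. The hyperplane $\Hyp(B)$ is orthogonal to $\bar\one_B$ inside $\HH$ and bounds $\HS(B)$, and $\HS(B)$ contains the origin of $\HH$ (identified with the barycenter of $\Perm(\ground)$) since $|B| \le |\ground|$ forces $|B|(|\ground|+1)/2 \ge \binom{|B|+1}{2}$, so the setup of the theorem is matched. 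Under this identification, the polytope appearing in condition~(ii) is precisely $\Remo(\building)$, so condition~(ii) is exactly the statement we wish to characterize.

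First, I would identify the intersection point $\point(C)$ attached by Theorem~\ref{theo:HohlwegLangeThomas} to the maximal cone $C = \normalCone(\btree)$. The rays of $\normalCone(\btree)$ are the vectors $\bar\one_{\descendants(v, \btree)}$ as $v$ ranges over the non-root vertices of~$\btree$ (which, for a maximal $\building$-tree, are labeled by singletons, so there are $|\ground|-1$ of them), and the corresponding bounding hyperplanes are the $|\ground|-1$ hyperplanes $\Hyp(\descendants(v, \btree))$. Lemma~\ref{lem:intersectionPoint} shows that $\point(\btree)$ lies on each of them; since $\nestedFan(\building)$ is simplicial and $\normalCone(\btree)$ is full-dimensional in $\HH$, these normal rays form a basis of~$\HH$, so their orthogonal hyperplanes meet in a single point. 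This forces $\point(\normalCone(\btree)) = \point(\btree)$.

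Finally, I would translate condition~(i) of Theorem~\ref{theo:HohlwegLangeThomas} into a sign condition on $\Delta$. For two adjacent maximal $\building$-trees $\btree, \btree'$ with the notation of Lemma~\ref{lem:diff}, the shared wall of $\normalCone(\btree)$ and $\normalCone(\btree')$ lies in the hyperplane $\{x_s = x_{s'}\}$, with $x_s \le x_{s'}$ on $\normalCone(\btree)$ and $x_s \ge x_{s'}$ on $\normalCone(\btree')$, so $e_s - e_{s'}$ points from $\normalCone(\btree)$ to $\normalCone(\btree')$. Combined with the identity $\point(\btree') - \point(\btree) = \Delta(\btree, \btree') \cdot (e_s - e_{s'})$ of Lemma~\ref{lem:diff}, condition~(i) for this pair of cones reduces exactly to $\Delta(\btree, \btree') > 0$. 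The substantive content has already been absorbed into Lemmas~\ref{lem:intersectionPoint} and~\ref{lem:diff}, so no real obstacle remains; the only mildly delicate bookkeeping point is to confirm that the facet hyperplanes of $\Remo(\building)$ incident to the vertex associated with $\btree$ are exactly those indexed by the descendant sets of the non-root vertices of $\btree$, with the orientation compatible with the origin-containing half-spaces of Theorem~\ref{theo:HohlwegLangeThomas}, which is immediate from the description of the rays of $\normalCone(\btree)$.
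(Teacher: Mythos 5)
Your proposal is correct and follows exactly the paper's route: the paper's proof of Corollary~\ref{coro:characterization} is precisely the combination of Theorem~\ref{theo:HohlwegLangeThomas} with Lemmas~\ref{lem:intersectionPoint} and~\ref{lem:diff}, which is what you carry out. The only difference is that you spell out the bookkeeping the paper leaves implicit (origin containment of the half-spaces~$\HS(B)$, uniqueness of the intersection point forcing $\point(\normalCone(\btree)) = \point(\btree)$, and the wall orientation making $e_s - e_{s'}$ point from $\normalCone(\btree)$ to $\normalCone(\btree')$), which is welcome but not a different argument.
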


The following lemma gives a sufficient condition for this property to hold.

\begin{lemma}
\label{lem:IntersectionImpliesRemovahedra}
For any two adjacent maximal $\building$-trees~$\btree, \btree'$ as in Lemma~\ref{lem:diff}, if~$\descendants(s, \btree) \cap \descendants(s', \btree')$ belongs to~$\building \cup \{\varnothing\}$, then~$\Delta(\btree, \btree') > 0$.
\end{lemma}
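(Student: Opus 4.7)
My plan is to unfold the hypothesis into a simple cardinality constraint on the set~$R$, after which positivity of~$\Delta(\btree, \btree')$ falls out of the formula of Lemma~\ref{lem:diff} by direct inspection.

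The first task is to identify the intersection~$\descendants(s, \btree) \cap \descendants(s', \btree')$. With the conventions fixed in Lemma~\ref{lem:diff}, $s'$ is the parent of~$s$ in~$\btree$ and $s$ is the parent of~$s'$ in~$\btree'$, so the children of~$s$ in~$\btree$ form the set~$S \cup R$ while the children of~$s'$ in~$\btree'$ form~$S' \cup R$. Since the descendant sets of distinct children of a rooted tree are pairwise disjoint, and neither~$s$ nor~$s'$ lies in any~$\descendants(r)$ for~$r \in S \cup S' \cup R \cup R'$, I would compute
\[
\descendants(s, \btree) = \{s\} \cup \bigcup_{r \in S \cup R} \descendants(r),
\qquad
\descendants(s', \btree') = \{s'\} \cup \bigcup_{r \in S' \cup R} \descendants(r),
\]
and hence, using the pairwise disjointness of~$S$, $S'$, $R$, $R'$,
\[
\descendants(s, \btree) \cap \descendants(s', \btree') = \bigcup_{r \in R} \descendants(r).
\]

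The second step is to bound~$|R|$ using condition~(2) of Definition~\ref{def:btrees}. The elements of~$R$ are children of~$s$ in~$\btree$, hence pairwise incomparable in~$\btree$; so if~$|R| \ge 2$, condition~(2) would force $\bigcup_{r \in R}\descendants(r) \notin \building$. Since this union is also nonempty whenever~$R$ is nonempty, the hypothesis~$\descendants(s, \btree) \cap \descendants(s', \btree') \in \building \cup \{\varnothing\}$ forces~$|R| \le 1$.

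Finally, when~$|R| \le 1$, the double sum~$\pi_R = \sum_{r \ne r' \in R}|\descendants(r)|\cdot|\descendants(r')|$ is empty and therefore vanishes. Substituting~$\pi_R = 0$ into the expression from Lemma~\ref{lem:diff} gives
\[
\Delta(\btree, \btree') = (\delta_S+1)(\delta_{S'}+1) + \delta_{R'}(\delta_S + \delta_{S'} + \delta_R + 2) + \pi_{R'},
\]
whose right-hand side is strictly positive since~$(\delta_S+1)(\delta_{S'}+1) \ge 1$ and the remaining summands are nonnegative. The only step requiring real care is the intersection computation, since one must keep track of which of~$s$ or~$s'$ is the ancestor in each of~$\btree, \btree'$; the remainder is essentially a substitution into the formula of Lemma~\ref{lem:diff}.
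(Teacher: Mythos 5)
Your proof is correct and follows essentially the same route as the paper: identify $\descendants(s,\btree)\cap\descendants(s',\btree')=\bigcup_{r\in R}\descendants(r)$, deduce $|R|\le 1$ from the non-nestedness condition (your use of condition~(2) of Definition~\ref{def:btrees} is equivalent to the paper's appeal to Condition~\textit{(N2)} of Definition~\ref{def:nested}), and conclude $\pi_R=0$ so that the formula of Lemma~\ref{lem:diff} is manifestly positive. The only difference is that you spell out the intersection computation, which the paper simply asserts.
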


\begin{proof}
By assumption, the set
\[
\bigcup_{r \in R} \descendants(r) = \descendants(s, \btree) \cap \descendants(s', \btree')
\]
either belongs to the building set~$\building$ or is empty. Since $\descendants(r) \cap \descendants(r') = \varnothing$ for $r \ne r' \in R$ and $\descendants(r) \in \building$ for $r \in R$, we conclude that $|R| \leq 1$ by Condition~\textit{(N2)} in Definition~\ref{def:nested}. Thus~$\pi_R = 0$, $\delta_S \ge 0$ and~$\delta_{S'} \ge 0$. The statement follows.
\end{proof}

It follows that if~$\building$ is closed under intersection of exchangeable elements, then the $\building$-nested fan~$\nestedFan(\building)$ is the normal fan of the removahedron~$\Remo(\building)$. This concludes our first proof of Theorems~\ref{theo:main} and~\ref{theo:main2}. For graphical building sets, we conclude from Lemma~\ref{lem:graphicalBuildingSet} that~$\Remo(\building\graphG)$ realizes~$\nestedFan(\building)$ as soon as $\graphG$ is chordful. Conversely, the following lemma shows that the condition of Corollary~\ref{coro:characterization} is never satisfied for building sets of non chordful graphs, thus concluding the proof of the characterization of Theorem~\ref{theo:chordful}.

\begin{lemma}
\label{lem:sufficientGraphicalBuildingSets}
Let~$\graphG$ be a connected graph that is not chordful. Then there exist two adjacent maximal $\building\graphG$-trees~$\btree, \btree'$ such that~$\Delta(\btree, \btree') \le 0$
\end{lemma}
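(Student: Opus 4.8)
I would apply Corollary~\ref{coro:characterization}: it is enough to produce two adjacent maximal $\building\graphG$-trees $\btree,\btree'$ with $\Delta(\btree,\btree')\le0$. The formula of Lemma~\ref{lem:diff} shows this will hold as soon as the flip between $\btree$ and $\btree'$ satisfies $S=S'=R'=\varnothing$ and $|R|\ge2$, for it then reduces to $\Delta(\btree,\btree')=1-\pi_R$ with $\pi_R\ge1$ (the two elements of $R$ have non-empty descendant sets). Recall from the proof of Lemma~\ref{lem:IntersectionImpliesRemovahedra} that $\bigcup_{r\in R}\descendants(r)=\descendants(s,\btree)\cap\descendants(s',\btree')$; so the plan is to build a flip whose two ``exchanged'' descendant sets meet in a \emph{disconnected} induced subgraph of $\graphG$ and which is otherwise as small as possible. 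The whole proof thus reduces to constructing such a pair of trees.

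To extract a suitable local configuration from the failure of chordfulness, I would pick, among all cycles of $\graphG$ that do not induce a clique, one of minimal length $C=(c_0,\dots,c_{\ell-1})$, so $\ell\ge4$, and argue that $C$ is chordless unless $\ell=4$: a chord $c_ic_j$ would split $C$ into two strictly shorter cycles, both inducing cliques by minimality, so any missing edge $c_pc_q$ of $C$ would have $c_p,c_q$ on opposite arcs and therefore both adjacent to $c_i$ and to $c_j$, making $(c_p,c_i,c_q,c_j)$ a $4$-cycle missing the edge $c_pc_q$, which forces $\ell=4$. After a cyclic relabelling I may then assume $c_0$ has no neighbour among $c_2,\dots,c_{\ell-2}$ (if $\ell=4$, relabel so that $c_0c_2$ is the missing diagonal; if $\ell\ge5$, this is automatic since $C$ is chordless). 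Setting $D_1\eqdef\{c_0\}$, $D_2\eqdef\{c_2,\dots,c_{\ell-2}\}$ (a connected sub-path of $C$, a single vertex when $\ell=4$), $U\eqdef D_1\cup D_2$, $s\eqdef c_1$ and $s'\eqdef c_{\ell-1}$, one checks that $D_1$ and $D_2$ are exactly the two connected components of $\graphG[U]$, that each of $s$ and $s'$ has a neighbour in $D_1$ and a neighbour in $D_2$, and hence that $\{s\}\cup U$, $\{s'\}\cup U$ and $\{s,s'\}\cup U=V(C)$ all belong to $\building\graphG$, while $U\notin\building\graphG$.

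Then I would realize the flip. For $i\in\{1,2\}$, fix a maximal tree $\mathcal{T}_i$ of the graphical building set $\building{\graphG[D_i]}$ of the connected graph $\graphG[D_i]$, with root $\rho_i$. Let $\mathcal{T}_D$ be the rooted tree on $D\eqdef\{s,s'\}\cup U$ with root $s'$, unique child $s$, and children $\rho_1,\rho_2$ of $s$ carrying the subtrees $\mathcal{T}_1,\mathcal{T}_2$; and $\mathcal{T}'_D$ the same tree with $s$ and $s'$ interchanged along the chain. Using the reformulation of Condition~\textit{(N2)} valid for graphical building sets, both are maximal trees of $\building{\graphG[D]}$: their descendant sets are sub-blocks of $D_1$ or of $D_2$, the sets $\{s\}\cup U$ or $\{s'\}\cup U$, and $D$, and any two incomparable nodes have descendant sets meeting both $D_1$ and $D_2$ in such a way that no edge joins the two parts of their union. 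Finally, choose a chain of connected subsets $D=V_0\subsetneq V_1\subsetneq\dots\subsetneq V_m=\ground$ with $|V_t\setminus V_{t-1}|=1$ (possible since $\graphG$ and $\graphG[D]$ are connected), let $q_t$ be the element of $V_t\setminus V_{t-1}$, and build $\btree$ (resp.~$\btree'$) by stacking on top of the root of $\mathcal{T}_D$ (resp.~$\mathcal{T}'_D$) the chain in which $q_1$ is the parent of that root and $q_{t+1}$ is the parent of $q_t$, so that $q_m$ is the global root. Since each $V_t$ is connected and every chain node is an ancestor of every node of $\mathcal{T}_D$, both $\btree$ and $\btree'$ are maximal $\building\graphG$-trees; they collapse to the same tree when the edge between $s$ and $s'$ is contracted, hence are adjacent; and reading off the flip data ($s$ has children $\rho_1,\rho_2$ in $\btree$ and sole child $s'$ in $\btree'$, and symmetrically for $s'$) gives $S=S'=R'=\varnothing$ and $R=\{\rho_1,\rho_2\}$, so $\Delta(\btree,\btree')=1-\pi_R\le0$.

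The main obstacle I anticipate is the combinatorial claim of the second step — that a shortest non-clique cycle is chordless or a quadrilateral — together with the bookkeeping needed to confirm that the trees constructed in the third step really do satisfy Conditions~\textit{(N1)}--\textit{(N2)} and that their flip is of the type $S=S'=R'=\varnothing$, $R=\{\rho_1,\rho_2\}$ in the precise conventions of Lemma~\ref{lem:diff} (i.e. the orientation of the pair $\{s,s'\}$ and of the pair $\{\btree,\btree'\}$); the remaining verifications are routine.
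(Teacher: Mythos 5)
Your proposal is correct, and its engine is the same as the paper's: exhibit a flip whose data satisfy $S=S'=R'=\varnothing$ and $|R|\ge 2$, so that Lemma~\ref{lem:diff} gives $\Delta(\btree,\btree')=1-\pi_R\le 0$ (your orientation conventions, with $s'$ covering $s$ in $\btree$ and $R=\{\rho_1,\rho_2\}$ migrating from $s$ to $s'$, match those of the paper). Where you differ is in how the two adjacent trees are produced. The paper works with an arbitrary non-clique cycle: it completes $\{\{a\},\{b\}\}$ (for $a,b$ non-adjacent on the cycle) to a nested set of subsets of the cycle each containing $a$ or $b$, maximal for this property, and uses this maximality to find two vertices $s,s'$ of the cycle attached to both blocks $B_a$ and $B_b$; the trees are then obtained by completing two nested sets abstractly. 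You instead first reduce to a cycle of minimal length among non-clique cycles, prove it is chordless or a quadrilateral missing a diagonal (your splitting-along-a-chord argument is sound), and then write the trees down completely explicitly: the two components $\{c_0\}$ and $\{c_2,\dots,c_{\ell-2}\}$ hung below $s=c_1$, then $s'=c_{\ell-1}$, then a connected one-vertex-at-a-time chain up to $\ground$. Your route costs an extra structural lemma about minimal non-clique cycles and some bookkeeping, but it buys full explicitness: the verification of the tree conditions and of the flip data $(S,S',R,R')$ is mechanical (and, as you note, for graphical building sets the incomparability condition only needs to be checked on pairs), whereas the paper's greedy completion is shorter but leaves the analogous checks ($|R|\ge2$, $S=S'=R'=\varnothing$, existence of at least two attaching vertices) to the reader. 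Both arguments are valid proofs of Lemma~\ref{lem:sufficientGraphicalBuildingSets}; note also that Corollary~\ref{coro:characterization} is only needed afterwards, to deduce Theorem~\ref{theo:chordful}, not for the lemma itself.
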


\begin{proof}
Consider a cycle~$\cycle$ in~$\graphG$ not inducing a clique and choose two vertices~$a,b \in \cycle$ not connected by a chord. As $a$ and~$b$ are not connected, $\{\{a\},\{b\}\}$ is a $\building$-nested set. We complete it to a $\building$-nested set~$\bar\nested$ formed by subsets of~$\cycle$ all containing either~$a$ or~$b$, such that~$\bar\nested$ be maximal for this property. Let~$B_a$ and~$B_b$ denote the maximal elements of~$\bar\nested$ containing~$a$ and~$b$, respectively. By maximality of~$\bar\nested$, all remaining vertices in~$\cycle \ssm (B_a \cup B_b)$ are connected to both~$B_a$ and~$B_b$. Moreover, there are at least two such vertices $s, s'$. Consider two maximal $\building$-nested sets~$\nested$ and~$\nested'$ both containing~$\bar\nested$ and~$B_a \cup B_b \cup \{s,s'\}$, and such that~$\nested$ contains $B_a \cup B_b \cup \{s\}$ and~$\nested'$ contains $B_a \cup B_b \cup \{s'\}$. The corresponding $\building$-trees $\btree$ and $\btree'$ are such that node $s'$ covers~$s$ in~$\btree$ and $s$ covers $s'$ in $\btree'$. Moreover, using the notations introduced earlier in this section, $R' = S = S' = \varnothing$ while~$|R| \ge 2$ as $R$~contains one vertex of~$B_a$ and one of~$B_b$. Thus~$\delta_S = \delta_{S'} = \delta_{R'} = \pi_{R'} = 0$ and~$\pi_R \ge 1$, which implies~$\Delta(\btree, \btree') \leq 0$.
\end{proof}

As already observed earlier, for general building sets, the condition of Lemma~\ref{lem:IntersectionImpliesRemovahedra} is not necessary for~$\Remo(\building\graphG)$ to realize~$\nestedFan(\building)$. For example, in the building set~$\building\ex{4}$ of Example~\ref{exm:buildingSets}, the building blocks~$\{1,2,3,4\}$ and~$\{3,4,5\}$ are exchangeable but~$\{1,2,3,4\} \cap \{3,4,5\} = \{3,4\} \notin \building\ex{4}$. However, $\{1,2,3,4\}$ and~$\{3,4,5\}$ are the only two intersecting exchangeable building blocks of~$\building\ex{4}$, and for any two maximal $\building\ex{4}$-trees~$\btree, \btree$ such that~$\nested(\btree) \ssm \big\{ \{1,2,3,4\} \} = \nested(\btree') \ssm \big\{ \{3,4,5\} \}$, we have~${\Delta(\btree, \btree') = 1}$. Therefore, Corollary~\ref{coro:characterization} ensures that~$\Remo(\building\ex{4})$ realizes~$\nestedFan(\building\ex{4})$.

\begin{remark}
The arguments of this first proof of Theorems~\ref{theo:main} and~\ref{theo:main2} can be used to show that any nestohedron can be realized as a skew removahedra. A \defn{skew permutohedron} is the convex hull~$\Perm^\b{p}(\ground) \eqdef \conv \set{\sum_{s \in \ground} p_{\sigma(s)}e_s}{\sigma \in \fS_\ground}$ of the orbit of a generic point~$\b{p} \in \R^\ground$ (\ie $p_s \ne p_s'$ for~$s \ne s' \in \ground$) under the action of the symmetric group~$\fS_\ground$ on~$\R^\ground$ by permutation of coordinates. Equivalently, a skew permutahedron is the deformed permutahedron~$\Perm^\phi(\ground) \eqdef \Defo(\b{z}^\phi)$ for a right hand side~$\b{z}^\phi \eqdef (z^\phi_R)_{R \subseteq \ground} \in \R^{2^\ground}$ defined by~$z^\phi_R \eqdef \phi(|R|)$ for some function~$\phi : \N \to \R_{>0}$.
For example, the classical permutahedron~$\Perm(\ground)$ is the permutahedron~$\Perm^\phi(\ground)$ for~$\phi(n) = \binom{n+1}{2}$. A \defn{skew removahedron} is a polytope
\[
\Remo^\b{p}(\building) = \Remo^\phi(\building) \eqdef \biggset{\b{x} \in \HH}{\sum_{s \in B} x_s \ge \phi(|B|) \text{ for all } B \in \building},
\]
obtained by removing inequalities from the facet description of a skew permutahedron~$\Perm^\b{p}(\ground) = \Perm^\phi(\ground)$. Note that even if skew removahedra have much more freedom than classical removahedra, they do not contain all not all deformed permutahedra.

A consequence of the realization of~\cite{Devadoss} is that all graph associahedra can be realized as skew removahedra, namely by removing facets of the skew permutahedron~$\Perm^\phi(\ground)$ for~$\phi(n) = \gamma^n$ with~$\gamma > 2$. The arguments presented in this Section provide an alternative proof of this result and extend it to all nestohedra. Let us quickly give the proof here.

For a $\building$-tree~$\btree$, consider the point~$\point^\gamma(\btree) \in \R^\ground$ defined by
\[
\sum_{r \in \descendants(s, \btree)} \point^\gamma(\btree)_r = \gamma^{|\descendants(s, \btree)|} \qquad \text{for all } s \in \ground.
\]
We do not need to compute explicitly the coordinates of~$\point^\gamma(\btree)$. We will only use that for any~$s \in \ground$,
\[
\point^\gamma(\btree)_s = \gamma^{|\descendants(s, \btree)|} - \sum_{r \in \descendants(s, \btree)} \gamma^{|\descendants(r, \btree)|}.
\]
Consider now two adjacent maximal $\building$-trees~$\btree, \btree'$ with the same notations as in \fref{fig:flip}. For a subset~$X \in \{S,S',R,R'\}$, define
\[
\delta_X \eqdef \sum_{x \in X} |\descendants(x)|
\qquad\text{and}\qquad
\Gamma_X \eqdef \sum_{x \in X} \gamma^{|\descendants(x)|}.
\]
Using these notations, we compute:
\begin{align*}
\point^\gamma(\btree)_s & = \gamma^{1 + \delta_S + \delta_R} - \Gamma_S - \Gamma_R, \\
\point^\gamma(\btree)_{s'} & = \gamma^{2 + \delta_S + \delta_{R'} + \delta_{S'} + \delta_{R}} - \gamma^{1 + \delta_S + \delta_R} - \Gamma_{S'} - \Gamma_{R'},
\end{align*}
and
\begin{align*}
\point^\gamma(\btree')_s & = \gamma^{2 + \delta_S + \delta_{R'} + \delta_{S'} + \delta_{R}} - \Gamma_S - \Gamma_{R'} - \gamma^{1 + \delta_{S'} + \delta_R}, \\
\point^\gamma(\btree')_{s'} & = \gamma^{1 + \delta_{S'} + \delta_R} - \Gamma_{S'} - \Gamma_R.
\end{align*}
Moreover, the coordinates~$\point(\btree)_r$ and~$\point(\btree')_r$ coincide if~$r \in \ground \ssm \{s,s'\}$ since the flip from~$\btree$ to~$\btree'$ did not affect the children of the node~$r$. We therefore obtain that~$\point^\gamma(\btree') - \point^\gamma(\btree) = \Delta(\btree, \btree') \cdot (e_s - e_{s'})$, where
\begin{align*}
\Delta(\btree, \btree')
& = \gamma^{2 + \delta_S + \delta_{R'} + \delta_{S'} + \delta_R} - \gamma^{1 + \delta_{S'} + \delta_R} - \gamma^{1 + \delta_S + \delta_R} + \Gamma_R - \Gamma_{R'} \\
& \ge \gamma^{\delta_R} \big( \big( \gamma^{1+\delta_S} - 1 \big) \big( \gamma^{1+\delta_{S'}} - 1\big) - 1 \big) + \gamma^{2 + \delta_S + \delta_{S'} + \delta_R} (\gamma^{\delta_{R'}} - 1) - \Gamma_{R'}.
\end{align*}
Since~$\gamma > 2$, we have~$\big( \gamma^{1+\delta_S} - 1 \big) \big( \gamma^{1+\delta_{S'}} - 1\big) - 1 > 0$ and $\gamma^{2 + \delta_S + \delta_{S'} + \delta_R} (\gamma^{\delta_{R'}} - 1) - \Gamma_{R'} \ge \gamma^{\delta_{R'} + 1} - \gamma - \Gamma_{R'} \ge 0$ since~$\gamma^{n+m} \ge \gamma^n + \gamma^m$ for~$1 \le n, m$. We therefore obtain that~$\Delta(\btree, \btree') > 0$ and we conclude by Theorem~\ref{theo:HohlwegLangeThomas}.
\end{remark}


\section{Minkowski sums}
\label{sec:Minkowski}

In this section, we provide an alternative proof that any building set closed under intersection can be realized as a removahedron. The approach of this section is complementary to the previous one since it focusses on Minkowski sums. As illustrated by the following statement observed independently by A.~Postnikov~\cite[Section~7]{Postnikov} and E.-M.~Feichtner and B.~Sturmfels~\cite{FeichtnerSturmfels}, Minkowski sums provide a powerful tool to realize nested complexes.

\begin{theorem}[\protect{\cite[Section~7]{Postnikov}, \cite{FeichtnerSturmfels}}]
\label{theo:Postnikov}
For any building set~$\building$, the $\building$-nested fan~$\fan(\building)$ is the normal fan of the Minkowski sum
\[
\Mink[\building] = \sum_{B \in \building} y_B\simplex_B,
\]
where~$(y_B)_{B \in \building}$ are arbitrary strictly positive real numbers, and~$\simplex_B$ denotes the face of the standard simplex corresponding to~$B$.
\end{theorem}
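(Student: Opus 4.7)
The plan is to exploit the standard fact that the normal fan of a Minkowski sum is the common refinement of the normal fans of its summands, and to match this common refinement with~$\fan(\building)$. The normal fan of each simplex~$\simplex_B$ has the~$|B|$ maximal cones $\normalCone_b \eqdef \{\b{x} \in \HH : x_b \le x_c \text{ for all } c \in B\}$ indexed by $b \in B$, so the normal fan of~$\Mink[\building] = \sum_B y_B \simplex_B$ has maximal cones of the form
\[
C_f \eqdef \bigset{\b{x} \in \HH}{x_{f(B)} \le x_b \text{ for all } B \in \building \text{ and } b \in B},
\]
indexed by the choice functions $f : \building \to \ground$ with $f(B) \in B$ for which $C_f$ is full-dimensional. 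I will exhibit a bijection between such choice functions and the maximal $\building$-trees, matching~$C_f$ with the nested cone~$\normalCone(\btree)$.

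The key step is a recursive algorithm producing, for each generic $\b{x} \in \HH$ (with pairwise distinct coordinates), a maximal $\building$-tree~$\btree(\b{x})$ whose cone contains~$\b{x}$ in its interior. Set the root of~$\btree(\b{x})$ to be the element~$s \in \ground$ minimizing~$x_s$; observe that the restricted collection~$\{B \in \building : s \notin B\}$ is a building set on $\ground \ssm \{s\}$ whose maximal elements~$B_1, \dots, B_k$ partition~$\ground \ssm \{s\}$ (any two intersecting maximal members would unite to a larger one by axiom~(B1)); then recursively build a subtree on each~$B_i$, viewed as the ground set of the connected building set $\{B \in \building : B \subseteq B_i\}$, and attach its root as a child of~$s$. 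The conditions of Definition~\ref{def:btrees} follow directly from the construction, and the minimality of~$x_s$ at each level shows that~$\b{x}$ satisfies the strict inequalities defining the relative interior of~$\normalCone(\btree(\b{x}))$.

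The main obstacle is the uniqueness statement: any maximal $\building$-tree~$\btree'$ whose cone contains~$\b{x}$ in its interior must coincide with~$\btree(\b{x})$. Since strict ancestors have strictly smaller coordinates, the root of~$\btree'$ is forced to be the same element~$s \in \ground$ minimizing~$x_s$. The subtle point is to show that the descendant sets of the children of this root in~$\btree'$ are precisely the sets~$B_1, \dots, B_k$ above. If some child's descendant set~$B_i'$ were properly contained in a maximal~$B$ of $\{B \in \building : s \notin B\}$, then~$B$ would meet the descendant sets of at least two children of the root, and iterating axiom~(B1) between~$B$ and these descendant sets would produce a union of descendant sets of at least two incomparable vertices of~$\btree'$ that lies in~$\building$, contradicting condition~(2) of Definition~\ref{def:btrees}. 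Recursing into the subtrees yields $\btree' = \btree(\b{x})$.

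Combining these steps, the map $\btree \mapsto f_\btree$, where $f_\btree(B)$ is the topmost (and, by the same (B1)-vs-(2) argument, necessarily unique) element of~$B$ in~$\btree$, sends maximal $\building$-trees bijectively to those choice functions~$f$ for which $C_f$ is full-dimensional, and identifies~$\normalCone(\btree)$ with~$C_{f_\btree}$. Since the normal fan of~$\Mink[\building]$ and the $\building$-nested fan~$\fan(\building)$ are both complete simplicial fans on~$\HH$ sharing the same maximal cones, they coincide, which completes the proof.
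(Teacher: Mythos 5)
Your proof is correct in its essentials, but it takes a genuinely different route from the paper, which does not prove Theorem~\ref{theo:Postnikov} from scratch (it is quoted from \cite{Postnikov, FeichtnerSturmfels}) but instead proves the more general Theorem~\ref{theo:newNestohedron}: there, for each nested set~$\nested$ one exhibits an explicit candidate face $F_\nested = \sum_{C} y_C \simplex_{C \cap X_{N(C)}}$ of the Minkowski sum, checks that it maximizes every functional in the relative interior of~$\normalCone(\nested)$, and then a dimension count (Lemma~\ref{lem:dimensionMinkowskiSum}) shows that $\nested \mapsto F_\nested$ is an anti-isomorphism onto the whole face lattice. You instead argue purely at the level of fans: the normal fan of the sum is the common refinement of the normal fans of the summands, and you match its full-dimensional cones~$C_f$ with the cones~$\normalCone(\btree)$ of maximal $\building$-trees via the topmost-element choice function, with existence and uniqueness of the tree attached to a generic point supplied by your recursive construction (which in effect re-proves completeness of the nested fan, making the argument self-contained, and gives a clean combinatorial description of the vertices). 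What the paper's route buys, and yours does not directly: it describes all faces rather than only the maximal cones, and, crucially for the removahedron application, it survives when the sum is restricted to a generating subset~$\summands \subsetneq \building$ (Theorem~\ref{theo:newNestohedron}); your argument genuinely uses every~$B \in \building$ as a summand --- for instance you need the summands~$\simplex_{\descendants(r, \btree)}$ to recover the edge inequalities of~$\normalCone(\btree)$ inside~$C_{f_\btree}$, and the uniqueness of the topmost element of each~$B$ rests on the (B1)-versus-(N2) interplay --- so it does not extend to that relaxation. Two compressed steps deserve to be written out, though both are routine: condition~(2) of Definition~\ref{def:btrees} for the tree you construct (a union of descendant sets meeting two of the blocks~$B_i$ would, by~(B1) and maximality, be absorbed into a single~$B_i$), and the two inclusions giving $C_{f_\btree} = \normalCone(\btree)$ (one direction from $B \subseteq \descendants(f_\btree(B), \btree)$, the other from the summands~$\descendants(r, \btree)$), together with the observation that a full-dimensional~$C_f$ determines~$f$, which makes your bijection well posed.
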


Remember from Remark~\ref{rem:genericMinkowskiSum} that the normal fan, and thus the combinatorics, of the Minkowski sum~$\Mink[\building]$ only depends on~$\building$, not on the values of the dilation factors~$(y_B)_{B \in \building}$ (as soon as all these values are strictly positive). These Minkowski sums~$\Mink[\building]$ are deformed permutahedra, but not necessarily removahedra. In this section, we relax Theorem~\ref{theo:Postnikov} to give a sufficient condition for a subsum of~$\Mink[\building]$ to keep the same normal fan, and prove that a well-chosen subsum of~$\Mink[\building]$ is indeed a removahedron.

\subsection{Generating sets and building paths}

We say that a subset~$\summands$ of~$\building$ is \defn{generating} if for each~$B \in \building$ and for each~$b \in B$, the set~$B$ is the union of the sets~$C \in \summands$ such that~$b \in C \subseteq B$. The following statement can be seen as a relaxation of Theorem~\ref{theo:Postnikov}: it shows that the Minkowski sum~$\Mink[\summands]$ of the faces of the standard simplex over a generating subset~$\summands$ of~$\building$ still has the same normal fan as the Minkowski sum~$\Mink[\building]$ itself. Note that we do not make here any particular assumption on the building set~$\building$. The proof, adapted from that of~\cite[Theorem 7.4]{Postnikov}, is delayed to the next section.

\begin{theorem}
\label{theo:newNestohedron}
If~$\summands$ is a generating subset of a connected building set~$\building$, then the $\building$-nested fan~$\fan(\building)$ is the normal fan of the Minkowski sum
\[
\Mink[\summands] = \sum_{C \in \summands} y_C \simplex_C,
\]
where~$(y_C)_{C \in \summands}$ are arbitrary strictly positive real numbers.
\end{theorem}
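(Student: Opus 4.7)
The plan is to deduce the theorem from Postnikov's realization (Theorem~\ref{theo:Postnikov}) via a Minkowski-summand comparison followed by a wall-crossing check. Since $\summands \subseteq \building$, I would complete~$(y_C)_{C \in \summands}$ to a family~$(z_B)_{B \in \building}$ of strictly positive dilation factors with $z_C = y_C$ for~$C \in \summands$, yielding the decomposition
\[
\sum_{B \in \building} z_B \simplex_B \;=\; \Mink[\summands] \,+\, \sum_{B \in \building \ssm \summands} z_B \simplex_B.
\]
By Theorem~\ref{theo:Postnikov} the left-hand side has normal fan~$\fan(\building)$; this exhibits $\Mink[\summands]$ as a Minkowski summand, so the normal fan of $\Mink[\summands]$ coarsens $\fan(\building)$. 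In particular, each cone~$\normalCone(\btree)$ of~$\fan(\building)$ lies in a single normal cone of~$\Mink[\summands]$, whose corresponding vertex is
\[
\point(\btree) \eqdef \sum_{C \in \summands} y_C \, e_{\topmost(C, \btree)},
\]
where $\topmost(C, \btree)$ denotes the (necessarily unique) topmost node of~$C$ in the maximal $\building$-tree~$\btree$.

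To upgrade this coarsening to an equality, I would verify that $\point(\btree) \ne \point(\btree')$ for every pair of adjacent maximal $\building$-trees. With the notation of Lemma~\ref{lem:diff} and $D \eqdef \descendants(s', \btree) = \descendants(s, \btree')$, a flip analysis shows that $\topmost(C, \btree) = \topmost(C, \btree')$ unless $\{s, s'\} \subseteq C \subseteq D$, in which case $\topmost(C, \btree) = s'$ and $\topmost(C, \btree') = s$. The three cases are: (a)~if $\topmost(C, \btree)$ is a node other than $s, s'$, its descendant set is unchanged by the flip and so the same node remains topmost; (b)~if $\topmost(C, \btree) = s$, then $C \subseteq \descendants(s, \btree) \subseteq \descendants(s, \btree') = D$ keeps $s$ topmost in~$\btree'$; (c)~if $\topmost(C, \btree) = s'$ with $s \notin C$, then forcibly $C \subseteq \descendants(s', \btree')$ (otherwise $s'$ would be incomparable in~$\btree'$ to some element of~$C \cap (S \cup R' \cup \descendants(S) \cup \descendants(R'))$, contradicting uniqueness of the topmost in~$\btree'$), and $s'$ stays topmost. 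Summing the contributions gives
\[
\point(\btree) - \point(\btree') = \bigg(\!\!\sum_{\substack{C \in \summands \\ \{s,s'\} \subseteq C \subseteq D}} y_C \bigg) (e_{s'} - e_s).
\]

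Finally I would invoke the generating hypothesis to ensure the coefficient in parentheses is strictly positive: applied to $B = D \in \building$ and $b = s \in D$, it gives $D = \bigcup \set{C \in \summands}{s \in C \subseteq D}$, and since $s' \in D$, at least one such~$C$ contains both $s$ and $s'$, contributing a positive~$y_C$. Therefore $\point(\btree) \ne \point(\btree')$, every wall of $\fan(\building)$ is preserved in the normal fan of $\Mink[\summands]$, and combined with the summand argument this yields the desired equality between the normal fan of $\Mink[\summands]$ and $\fan(\building)$. The main technical obstacle I anticipate is the topmost case analysis: it requires carefully tracking the four subsets~$S, S', R, R'$ around the flipped edge and repeatedly appeals to uniqueness of the topmost element, which is itself guaranteed by the Minkowski-summand step applied to both~$\btree$ and~$\btree'$.
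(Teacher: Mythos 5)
Your argument is correct, but it takes a genuinely different route from the paper's proof of Theorem~\ref{theo:newNestohedron}. The paper argues directly on the face lattice: for \emph{every} nested set~$\nested$ (not only the maximal ones) it exhibits the candidate face $F_\nested = \sum_{C \in \summands} y_C \simplex_{C \cap X_{N(C)}}$, checks that it is the face of~$\Mink[\summands]$ selected by any functional in the relative interior of~$\normalCone(\nested)$, and then uses the generating hypothesis only through a dimension count (Lemma~\ref{lem:dimensionMinkowskiSum}) showing $\dim F_\nested = |\ground| - |\nested|$, which upgrades the poset anti-homomorphism $\nested \mapsto F_\nested$ to an anti-isomorphism. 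You instead work only with vertices: the summand trick ($\Mink[\summands]$ is a Minkowski summand of~$\Mink[\building]$, so by Theorem~\ref{theo:Postnikov} and Remark~\ref{rem:genericMinkowskiSum} its normal fan is coarsened by~$\fan(\building)$) gives one inclusion, and your wall-by-wall computation of $\point(\btree)-\point(\btree')$, made strictly nonzero by applying the generating condition to $B = D$ and $b = s$, rules out any merging of adjacent maximal cones; this is essentially the wall-crossing strategy of Section~\ref{sec:countingMaximalPaths} transplanted to the Minkowski sum, and it buys explicit vertex coordinates and edge coefficients, whereas the paper's argument buys the full face correspondence in one stroke. Two steps that you assert deserve a sentence each to be airtight. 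First, the identification of the vertex attached to $\normalCone(\btree)$ as $\sum_C y_C e_{\topmost(C,\btree)}$, together with the uniqueness of $\topmost(C,\btree)$: this follows either by evaluating a functional $\sum_{N \in \nested(\btree)} \lambda_N \bar\one_N$ with all $\lambda_N > 0$ on the coordinates indexed by~$C$, or from the standard argument that two incomparable maximal nodes of~$C$ in~$\btree$ would, via (B1) and Condition~(2) of Definition~\ref{def:btrees}, put the union of their descendant sets in~$\building$; note also that with the paper's conventions ($\ge$ inequalities, so $\normalCone(\btree)$ consists of functionals \emph{minimized} at the associated vertex) it is indeed the topmost node that is selected. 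Second, the closing inference ``coarsening plus distinct vertices across every wall implies equality of the two fans'' is true but needs the convexity remark that a maximal cone of the coarser fan containing two distinct maximal cones of~$\fan(\building)$ must contain two \emph{adjacent} ones; alternatively, since $\sum_{r \in N} \point(\btree)_r = \sum_{C \subseteq N} y_C$ does not depend on the maximal $\building$-tree~$\btree$ containing~$N$ in its nested set, you could skip the summand step altogether and conclude directly from Theorem~\ref{theo:HohlwegLangeThomas}, exactly as in Section~\ref{sec:countingMaximalPaths}.
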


\begin{example}
Given a connected graph~$\graphG$, the set~$\summands\graphG$ of all vertex sets of induced subpaths of~$\graphG$ is a generating subset of the graphical building set~$\building\graphG$. Set here~$y_C = 1$ for all~$C \in \summands\graphG$. Then
\begin{enumerate}
\item if~$\graphG = \pathG$ is a path, then $\building\pathG$ and~$\summands\pathG$ coincide and~$\Mink[\building\pathG] = \Mink[\summands\pathG]$ is precisely Loday's associahedron~\cite{Loday};
\item if~$\graphG = \tree$ is a tree, then~$\Mink[\summands\tree]$ is the (unsigned) tree associahedron of~\cite{Pilaud};
\item if~$\graphG = K_\ground$ is the complete graph, then~$\building K_\ground = 2^\ground \ssm \{\varnothing\}$ while~$\summands K_\ground = \set{R \subseteq \ground}{1 \le |R| \le 2}$. Thus, $\Mink[\summands K_\ground]$ is the classical permutahedron, while~$\Mink[\building K_\ground]$ is a dilated copy of it.
\end{enumerate}
\end{example}

In fact, the notion of paths can be extended from these graphical examples to the more general setting of connected building sets closed under intersection. Consider a building set~$\building$ on the ground set~$\ground$, closed under intersection. For any~$R \subseteq \ground$, we define the \defn{$\building$-hull} of~$R$ to be the smallest element of~$\building$ containing~$R$ (it exists since~$\building$ is closed under intersection). In particular, for any~$s,t \in \ground$, we defined the \defn{$\building$-path} $\bpath(s,t)$ to be the $\building$-hull of~$\{s,t\}$. We denote by~$\bpaths(\building)$ the set of all $\building$-paths.

\begin{example}
For a graphical building set~$\building\graphG$, the $\building\graphG$-paths are precisely the induced subpaths of~$\graphG$, \ie with our notations~$\bpaths(\building\graphG) = \summands\graphG$.
\end{example}

\begin{lemma}
\label{lem:generating}
The set~$\bpaths(\building)$ of all $\building$-paths is a generating subset of~$\building$.
\end{lemma}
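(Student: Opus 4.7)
The plan is to unpack the definition of generating and reduce the statement to a one-line consequence of the definition of a $\building$-path. Fix $B \in \building$ and $b \in B$. I must show
\[
B = \bigcup \set{C \in \bpaths(\building)}{b \in C \subseteq B}.
\]
The inclusion~$\supseteq$ is immediate from the right-hand side. For~$\subseteq$, I take an arbitrary~$b' \in B$ and produce a single $\building$-path that contains~$b$, is contained in~$B$, and contains~$b'$; this is the only substantive step.

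The natural candidate is~$\bpath(b,b')$, the $\building$-hull of~$\{b,b'\}$, which is well-defined because~$\building$ is closed under intersection (so the intersection of all building blocks containing~$\{b,b'\}$ is itself in~$\building$, and this intersection is nonempty since~$\ground \in \building$). By construction~$\{b,b'\} \subseteq \bpath(b,b')$, so $b \in \bpath(b,b')$ and $b' \in \bpath(b,b')$. Moreover~$B$ is an element of~$\building$ containing~$\{b,b'\}$, so by the minimality defining the $\building$-hull we have~$\bpath(b,b') \subseteq B$. Hence~$\bpath(b,b') \in \bpaths(\building)$ satisfies~$b \in \bpath(b,b') \subseteq B$ and contains~$b'$, which places~$b'$ in the right-hand union.

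There is essentially no obstacle here: the lemma is a direct consequence of the fact that closure under intersection makes the $\building$-hull operator well-defined, and the $\building$-path~$\bpath(b,b')$ is tailor-made to witness the generating condition. The only point that deserves explicit mention in the write-up is that~$\bpath(b,b')$ is indeed well-defined, which uses condition that~$\building$ be closed under intersection together with the fact (from connectedness) that at least one element of~$\building$, namely~$\ground$, contains~$\{b,b'\}$.
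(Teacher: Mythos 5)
Your proof is correct and follows essentially the same route as the paper: for each $b' \in B$ you exhibit the $\building$-path $\bpath(b,b')$ as the witness, using minimality of the $\building$-hull to get $\bpath(b,b') \subseteq B$, exactly as in the paper's argument. Your extra remark on the well-definedness of $\bpath(b,b')$ (closure under intersection plus $\ground \in \building$) merely makes explicit what the paper already notes when defining the $\building$-hull.
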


\begin{proof}
Let~$B \in \building$ and~$b \in B$. For any~$b' \in B$, the path~$\bpath(b,b')$ contains~$b$ and is contained in~$B$ (indeed, $B$ contains both~$b$ and~$b'$ and thus~$\bpath(b,b')$ by minimality of the latter). Therefore, $b'$ belongs to the union of the sets~$C \in \bpaths(\building)$ such that~$b \in C \subseteq B$. The lemma follows by definition of generating subsets.
\end{proof}

In fact, the set of paths~$\bpaths(\building)$ is the minimal generating subset of~$\building$, in the following sense.

\begin{lemma}
Any generating subset of~$\building$ contains~$\bpaths(\building)$.
\end{lemma}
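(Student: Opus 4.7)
\smallskip
\noindent\textbf{Proof plan.} Fix a $\building$-path $P = \bpath(s,t) \in \bpaths(\building)$, and let $\summands$ be an arbitrary generating subset of $\building$. The goal is to produce some element of $\summands$ that is forced to equal $P$.

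The plan is to apply the generating condition directly to $P$ itself. Since $P \in \building$ and $s \in P$, the generating hypothesis states that $P$ is the union of all sets $C \in \summands$ with $s \in C \subseteq P$. Because $t \in P$, at least one such $C$ must contain $t$ as well, so there exists $C \in \summands$ with $\{s,t\} \subseteq C \subseteq P$.

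The final step uses the minimality characterization of the $\building$-hull: since $C$ belongs to $\building$ and contains $\{s,t\}$, by definition $P = \bpath(s,t) \subseteq C$. Combined with $C \subseteq P$, this gives $C = P$, so $P \in \summands$, as desired.

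There is no real obstacle here; the only thing to be careful about is the direction of the containment argument (using the upper bound $C \subseteq P$ from the generating property and the lower bound $P \subseteq C$ from minimality of the $\building$-hull). The argument does not use the closure under intersection beyond what is already needed to make the notion of $\building$-hull well-defined.
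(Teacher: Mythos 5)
Your proof is correct and takes essentially the same route as the paper: apply the generating condition to $\bpath(s,t)$ with base point $s$, then invoke minimality of the $\building$-hull to force the witnessing set $C$ to equal $\bpath(s,t)$ (the paper phrases this as a contradiction, you phrase it directly, but the argument is the same).
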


\begin{proof}
Consider a generating subset~$\summands$ of~$\building$, and~$s,t \in \ground$. The $\building$-path~$\bpath(s,t)$ is the smallest building block containing~$\{s,t\}$. Therefore, $\bpath(s,t)$ has to be in~$\summands$, since otherwise~$t$ would not belong to the union of the sets~$C$ such that~$s \in C \subseteq \bpath(s,t)$.
\end{proof}

From Theorem~\ref{theo:newNestohedron} and Lemma~\ref{lem:generating}, we obtain that any Minkowski sum~$\Mink[\bpaths(\building)]$ is a realization of the $\building$-nested complex. We now have to choose properly the dilation coefficients to obtain a removahedron. For~$S \subseteq \ground$, define the coefficient
\[
\bar y_S \eqdef \big| \bigset{s,t \in \ground^2}{\bpath(s,t) = S} \big|.
\]

\begin{lemma}
\label{lem:dilationCoeff}
The dilation coefficients~$(\bar y_B)_{B \in \building}$ satisfy the following properties:
\begin{enumerate}[(i)]
\item $\bar y_S > 0$ for all~$S \in \bpaths(\building)$, and~$\bar y_S = 0$ otherwise.
\item For all~$B \in \building$,
\[
\sum_{S \subseteq B} \bar y_S = \binom{|B|+1}{2}.
\]
\end{enumerate}
\end{lemma}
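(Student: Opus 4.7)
The plan is to prove both parts essentially by unpacking the definition of $\bar y_S$ and applying the minimality property that defines the $\building$-hull.

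For part (i), I will argue tautologically. By construction, $\bpaths(\building)$ is the collection of subsets of $\ground$ that arise as $\bpath(s,t)$ for some $s,t \in \ground$, and $\bar y_S$ counts precisely the pairs $\{s,t\}$ for which $\bpath(s,t) = S$. Hence $\bar y_S \geq 1$ whenever $S \in \bpaths(\building)$ (pick any witnessing pair), while $\bar y_S = 0$ for every other $S$ since $\bpath(s,t)$ always lies in $\bpaths(\building)$ by definition.

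For part (ii), I will proceed by double counting. Exchanging the sum over $S \subseteq B$ with the implicit sum in the definition of $\bar y_S$ yields
\[
\sum_{S \subseteq B} \bar y_S = \big| \{\{s,t\} : \bpath(s,t) \subseteq B\} \big|,
\]
where the pairs are unordered and $s = t$ is allowed. The key step is the equivalence
\[
\bpath(s,t) \subseteq B \quad \Longleftrightarrow \quad s,t \in B.
\]
The forward direction is immediate since $\{s,t\} \subseteq \bpath(s,t)$. The reverse direction uses the essential property of the $\building$-hull: if $s,t \in B$ and $B \in \building$, then by minimality of $\bpath(s,t)$ as the smallest element of $\building$ containing $\{s,t\}$, we must have $\bpath(s,t) \subseteq B$. (This is where we silently use the hypothesis, made clear in the definition of $\building$-hull, that $\building$ is closed under intersection, so that the minimal building block containing $\{s,t\}$ indeed exists.) Once this equivalence is in hand, the right-hand side counts the unordered pairs with repetition allowed drawn from the $|B|$-element set $B$, which is $\binom{|B|}{2} + |B| = \binom{|B|+1}{2}$.

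There is no substantial obstacle: both parts are essentially bookkeeping, and the only conceptual input is the minimality of the $\building$-hull used to establish the equivalence above. The motivation is clear from the intended application: property (i) identifies the support of the Minkowski decomposition as the generating set $\bpaths(\building)$ (so that Theorem~\ref{theo:newNestohedron} applies), while property (ii) matches precisely the right-hand sides of the permutahedral inequalities, so that the resulting Minkowski sum is the removahedron~$\Remo(\building)$.
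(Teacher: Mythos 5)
Your proof is correct and follows essentially the same route as the paper: part~(i) is immediate from the definition of~$\bar y_S$, and part~(ii) is the same double count, grouping the unordered pairs (with repetition) of elements of~$B$ according to their $\building$-paths, using that~$\bpath(s,t) \subseteq B$ if and only if~$s,t \in B$. Your write-up merely makes explicit both directions of this equivalence, which the paper leaves implicit.
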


\begin{proof}
Point~(i) is clear by definition of the coefficients~$\bar y_S$. We prove Point~(ii) by double counting: for any~$B \in \building$ and any~$b,b' \in B$ (distinct or not), the path~$\bpath(b,b')$ is included in~$B$. We can therefore group pairs of elements of~$B$ according to their $\building$-paths:
\[
\binom{|B|+1}{2} = |\{b,b' \in B\}| = \sum_{S \subseteq B} |\bigset{b,b' \in B}{\bpath(b,b') = S}| = \sum_{S \subseteq B} \bar y_S. \qedhere
\]
\end{proof}

Combining Theorem~\ref{theo:newNestohedron} with Lemmas~\ref{lem:generating} and~\ref{lem:dilationCoeff}, we obtain an alternative proof of Theorem~\ref{theo:main}.

\begin{corollary}
For a building set~$\building$ closed under intersection, the removahedron~$\Remo(\building)$ coincides with the Minkowski sum~$\sum_{B \in \building} \bar y_B \simplex_B$, and its normal fan is the $\building$-nested fan.
\end{corollary}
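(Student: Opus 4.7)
The plan is to assemble the pieces already in place: Theorem~\ref{theo:newNestohedron}, Lemmas~\ref{lem:generating} and~\ref{lem:dilationCoeff}, and the remark following Definition~\ref{def:deformedPermutahedron} that expresses Minkowski sums of dilated simplex faces as deformed permutahedra.

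First, since $\building$ is closed under intersection, Lemma~\ref{lem:generating} tells us that $\bpaths(\building)$ is a generating subset of $\building$, while Lemma~\ref{lem:dilationCoeff}(i) guarantees that the coefficients $\bar y_S$ are strictly positive precisely when $S \in \bpaths(\building)$. Applying Theorem~\ref{theo:newNestohedron} to these dilation factors then yields that the normal fan of
\[
\sum_{B \in \building} \bar y_B \simplex_B \;=\; \sum_{S \in \bpaths(\building)} \bar y_S \simplex_S
\]
coincides with the $\building$-nested fan $\nestedFan(\building)$. This already establishes the claim about the normal fan.

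The next step is to read off the inequality description of this Minkowski sum. By the remark following Definition~\ref{def:deformedPermutahedron}, the sum equals $\Defo(\b{z})$ with $z_R = \sum_{S \subseteq R} \bar y_S$. Lemma~\ref{lem:dilationCoeff}(ii) then gives $z_B = \binom{|B|+1}{2}$ for every $B \in \building$, which matches exactly the right hand side in the definition of $\HS(B)$. Since the rays of $\nestedFan(\building)$ are indexed precisely by the elements of $\building \ssm \{\ground\}$, the only half-spaces $\HS(R)$ contributing facets to this polytope are those for $R \in \building \ssm \{\ground\}$; all inequalities indexed by $R \notin \building$ are redundant. It follows that
\[
\sum_{B \in \building} \bar y_B \simplex_B \;=\; \HH \cap \bigcap_{B \in \building} \HS(B) \;=\; \Remo(\building),
\]
which is the desired identification.

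The argument is essentially a bookkeeping exercise once Theorem~\ref{theo:newNestohedron} is in hand, so there is no real obstacle; the only point requiring care is the redundancy claim, which rests on the general principle that the facet-defining inequalities of a full-dimensional polytope correspond bijectively to the rays of its normal fan.
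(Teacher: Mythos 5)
Your proposal is correct and follows exactly the route the paper intends: Lemma~\ref{lem:generating} and Lemma~\ref{lem:dilationCoeff}(i) feed Theorem~\ref{theo:newNestohedron} to identify the normal fan, and the Postnikov remark together with Lemma~\ref{lem:dilationCoeff}(ii) identifies the right hand sides $z_B = \binom{|B|+1}{2}$, the remaining inequalities being redundant since the facets correspond to the rays $\bar\one_B$, $B \in \building \ssm \{\ground\}$, of the nested fan. Nothing to add.
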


\begin{remark}
For a graphical building set~$\building\graphG$ of a chordful graph~$\graphG$, the coefficients~$\bar y_C$ are all equal to~$1$ for all subpaths~$C \in \summands\graphG$. It is not anymore true for arbitrary building sets closed under intersection. For example, consider the building set~$\building\ex{5} \eqdef \big\{ \! \{1\}, \{2\}, \{3\}, \{4\}, \{1,2\}, \{1,2,3\} \! \big\}$, for which we obtain
\[
\Remo(\building\ex{5}) =  2\simplex_{\{1,2,3\}}  + \simplex_{\{1,2\}} + \simplex_{\{1\}} + \simplex_{\{2\}} + \simplex_{\{3\}}.
\]
This Minkowski decomposition is illustrated in \fref{fig:removahedron}.

\begin{figure}[b]
  \centerline{
    \begin{tabular}{cc@{\hspace{-.1cm}}c@{\hspace{-.6cm}}c}
      \begin{tikzpicture}
      	\draw (0cm, 0cm) -- (1cm, 1.732cm) -- (2cm, 1.732cm) -- (3cm, 0cm) -- (0cm, 0cm);
      	\filldraw [black] (0cm, 0cm) node[anchor=north east] {$\left( \begin{smallmatrix} 4\\ 1\\ 1 \end{smallmatrix}\right)$} circle (2pt)
            		  (1cm, 0cm) circle (2pt)
            		  (2cm, 0cm) circle (2pt)
            		  (3cm, 0cm) node[anchor=north west] {$\left( \begin{smallmatrix} 1\\ 4\\ 1 \end{smallmatrix}\right)$} circle (2pt)
            		  (.5cm, 0.5*1.732cm) circle (2pt)
            		  (1.5cm, 0.5*1.732cm) circle (2pt)
            		  (2.5cm, 0.5*1.732cm) circle (2pt)
            		  (1cm, 1.732cm) node[anchor=south east] {$\left( \begin{smallmatrix} 2\\ 1\\ 3 \end{smallmatrix}\right)$} circle (2pt)
            		  (2cm, 1.732cm) node[anchor=south west] {$\left( \begin{smallmatrix} 1\\ 2\\ 3 \end{smallmatrix}\right)$} circle (2pt);
      \end{tikzpicture}
      &
      \begin{tikzpicture}
      	\draw (0cm, 0cm) -- (2cm, 0cm) -- (1cm, 1.732cm) -- (0cm, 0cm);
      	\filldraw [black] (0cm, 0cm) node[anchor=north east] {$\left( \begin{smallmatrix} 2\\ 0\\ 0 \end{smallmatrix}\right)$} circle (2pt)
            		  (1cm, 0cm) circle (2pt)
            		  (2cm, 0cm) node[anchor=north west] {$\left( \begin{smallmatrix} 0\\ 2\\ 0 \end{smallmatrix}\right)$} circle (2pt)
            		  (.5cm, 0.5*1.732cm) circle (2pt)
            		  (1.5cm, 0.5*1.732cm) circle (2pt)
            		  (1cm, 1.732cm) node[anchor=south] {$\left( \begin{smallmatrix} 0\\ 0\\ 2 \end{smallmatrix}\right)$} circle (2pt);
      \end{tikzpicture}
      &
      \begin{tikzpicture}[baseline=-1.6cm]
      	\draw (0cm, 0cm) -- (1cm, 0cm);
      	\filldraw [black] (0cm, 0cm) node[anchor=east] {$\left( \begin{smallmatrix} 1\\ 0\\ 0 \end{smallmatrix}\right)$} circle (2pt)
      	                  (1cm, 0cm) node[anchor=west] {$\left( \begin{smallmatrix} 0\\ 1\\ 0 \end{smallmatrix}\right)$} circle (2pt);
      \end{tikzpicture}
      &
      \begin{tikzpicture}[baseline=-.65cm]
      	\filldraw [black] (1cm, 1cm) node[anchor=east] {$\left( \begin{smallmatrix} 1\\ 1\\ 1 \end{smallmatrix}\right)$} circle (2pt);
      \end{tikzpicture}
      \\
      $\qquad\qquad\qquad \Remo(\building\ex{5}) \qquad\qquad =$
      &
      $2 \, \simplex_{\{1,2,3\}}$
      &
      $+ \quad\quad \simplex_{\{1,2\}} \quad +$
      &
      $\qquad \begin{pmatrix} \simplex_{\{1\}} + \\ \simplex_{\{2\}} + \simplex_{\{3\}} \end{pmatrix}$
    \end{tabular}
  }
  \caption[]{The Minkowski decomposition of the $2$-dimensional removahedron~$\Remo(\building\ex{5})$ into faces of the standard simplex.}
  \label{fig:removahedron}
\end{figure}
\end{remark}

\subsection{Proof of Theorem~\ref{theo:newNestohedron}}

This section is devoted to the proof of Theorem~\ref{theo:newNestohedron}. We start with the following technical lemma on the affine dimension of Minkowski sums.

\begin{lemma}
\label{lem:dimensionMinkowskiSum}
\begin{enumerate}[(i)]
\item Let~$(P_i)_{i \in I}$ be polytopes lying in orthogonal subspaces of~$\R^n$. Then
\[
\dim \sum P_i = \sum \dim P_i.
\]
\item If~$\I \subseteq 2^\ground$ is such that~$\bigcap \I \neq \varnothing$, then~$\dim \sum_{I \in \I} \simplex_I = |\bigcup \I| - 1$.
\end{enumerate}
\end{lemma}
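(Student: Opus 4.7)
The plan is to compute in both cases the \emph{direction space} of the polytope in question (the linear span of differences of its points), since the dimension of a polytope equals the dimension of this space.

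For part~(i), I would first observe that if $p = \sum p_i$ and $q = \sum q_i$ are two points of $\sum P_i$ with $p_i, q_i \in P_i$, then $p - q = \sum (p_i - q_i)$; conversely any individual $p_i - q_i$ is realized as a difference in the Minkowski sum by fixing base points in the other summands. Hence the direction space of $\sum P_i$ equals the sum of the direction spaces of the individual~$P_i$. The orthogonality hypothesis forces these direction spaces to lie in pairwise orthogonal linear subspaces, so their sum is in fact direct and the dimensions add.

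For part~(ii), the upper bound is immediate: every $\simplex_I$ lies in the coordinate subspace $\R^{\bigcup \I}$ and in the affine hyperplane $\{x : \sum_s x_s = 1\}$, so $\sum_{I \in \I} \simplex_I$ sits inside the $(|\bigcup \I| - 1)$-dimensional affine subspace $\bigset{x \in \R^{\bigcup \I}}{\sum_s x_s = |\I|}$. For the lower bound, the hypothesis $\bigcap \I \neq \varnothing$ enters decisively: I would fix some $s_0 \in \bigcap \I$ to serve as a common anchor, and for each $t \in \bigcup \I$ choose $I_t \in \I$ containing~$t$. Since both $s_0$ and $t$ are vertices of $\simplex_{I_t}$, the vector $e_t - e_{s_0}$ lies in the direction space of $\simplex_{I_t}$, hence in that of $\sum_{I \in \I} \simplex_I$; the $|\bigcup \I| - 1$ such vectors (for $t \neq s_0$) are linearly independent, yielding the matching lower bound.

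No serious obstacle arises; both parts reduce to routine linear algebra once one passes from affine dimensions to direction spaces. The one point worth flagging is that the hypothesis $\bigcap \I \neq \varnothing$ in~(ii) is precisely what allows a single anchor vertex $s_0$ to reach every element of $\bigcup \I$ through one summand; without it, one would only be able to combine differences across disjoint summands as in~(i), and the dimension would drop accordingly.
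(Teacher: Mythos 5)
Your proof is correct. Part (i) is essentially the paper's argument: the direction space of a Minkowski sum is the sum of the direction spaces of the summands, and orthogonality makes that sum direct, so dimensions add. For part (ii), however, you take a genuinely different (and slightly more elementary) route. The paper fixes $x \in \bigcap \I$, orders $\I$ as $I_1, \dots, I_p$, sets $I'_j \eqdef I_j \ssm \big(\{x\} \cup \bigcup_{k<j} I_k\big)$, replaces each summand $\simplex_{I_j}$ by its face $\simplex_{I'_j \cup \{x\}}$, and then combines monotonicity of dimension under passing to faces with the additivity of part (i) and the partition $\bigcup\I \ssm \{x\} = \bigsqcup_j I'_j$ to get the lower bound $|\bigcup\I|-1$ (leaving the upper bound implicit). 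You instead compute the direction space directly: anchoring at $s_0 \in \bigcap\I$, each vector $e_t - e_{s_0}$ lies in the direction space of a single summand $\simplex_{I_t}$ with $t \in I_t$, hence in that of the whole sum, and these $|\bigcup\I|-1$ vectors are linearly independent; you also spell out the upper bound via the ambient affine subspace $\set{\b{x} \in \R^{\bigcup\I}}{\sum_s x_s = |\I|}$. Your version buys self-containedness: part (ii) does not invoke part (i) at all, and in particular it sidesteps the point that the faces $\simplex_{I'_j \cup \{x\}}$ used in the paper all share the vertex $e_x$, so their linear spans are not literally orthogonal --- the paper's appeal to (i) really only uses that their direction spaces are in direct sum. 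What the paper's decomposition buys in exchange is an explicit Minkowski subsum of faces realizing the dimension, in the same spirit in which the lemma is later applied in the proof of Theorem~\ref{theo:newNestohedron}.
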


\begin{proof}
Point~(i) is immediate as the union of bases of the linear spaces generated by the polytopes~$P_i$ is a basis of the linear space generated by~$\sum P_i$. For Point~(ii), fix~$x \in \bigcap \I$ and an arbitrary order~$I_1, \dots, I_p$ on~$\I$. Define~$I'_j \eqdef I_j \ssm \big( \{x\} \cup \bigcup_{k < j} I_k \big)$. We then have
\[
\dim \sum_{I \in \I} \simplex_I \ge \dim \sum_{\substack{j \in [p] \\ I'_j \ne \varnothing}} \simplex_{I'_j \cup \{x\}} \ge \sum_{\substack{j \in [p] \\ I'_j \ne \varnothing}} \dim \simplex_{I'_j \cup \{x\}} = \sum_{\substack{j \in [p] \\ I'_j \ne \varnothing}} |I'_j| = \big| \bigcup \I \big| - 1,
\]
where the first inequality holds since $\simplex_{I'_j \cup \{x\}}$ is a face of~$\simplex_{I_j}$, the second one is a consequence of Point~(i), and the last equality holds since we have the partition
\[
\big( \bigcup \I \big) \ssm \{x\}= \bigsqcup_{\substack{j \in [p] \\ I'_j \ne \varnothing}} I'_j. \qedhere
\]
\end{proof}

\begin{proof}[Proof of Theorem~\ref{theo:newNestohedron}]
Let~$\summands$ be a generating subset of a connected building set~$\building$, let $\b{y} \eqdef (y_C)_{C \in \summands}$ be strictly positive real numbers, let~$\b{z} \eqdef (z_R)_{R \subseteq \ground}$ be defined by~$z_R = \sum_{C \subseteq R} y_C$, and consider the polytope~$\Mink(\b{y}) = \Defo(\b{z})$.

Let~$\btree$ be a $\building$-tree and~$\nested \eqdef \nested(\btree)$ be the corresponding $\building$-nested set. For~$N \in \nested$, let
\[
X_N \eqdef N \ssm \bigcup_{\substack{N' \in \nested \\ N' \subsetneq N}} N'
\]
denote the label corresponding to~$N$ in the $\building$-tree~$\btree$, so that~$(X_N)_{N \in \nested}$ partitions~$\ground$. 

For~$C \in \summands$, we define~$N(C)$ to be the inclusion maximal element~$N$ of~$\nested$ such that~$C \cap X_N \ne \varnothing$. Note that this element is unique: otherwise, the union of the maximal elements $N \in \nested$ such that~$C \cap X_N \ne \varnothing$ would be contained in~$\building$, thus contradicting Condition~\textit{(N2)} in Definition~\ref{def:nested}. Observe also that~$N(C)$ is the inclusion minimal element~$N$ of~$\nested$ such that~$C \subseteq N$.

We now define
\[
F_\nested \eqdef \sum_{C \in \summands} y_C \simplex_{C \cap X_{N(C)}}.
\]
We will show below that~$F_\nested$ is a face of~$\Mink(\b{y})$ whose normal cone is precisely the cone~$\normalCone(\nested)$. The map~$\nested \to F_\nested$ thus defines an anti-isomorphism from the nested complex~$\nestedComplex(\building)$ to the face lattice of~$\Mink(\b{y})$.

Consider any vector~$\b{f} \eqdef (f_s)_{s \in \ground}$ in the relative interior of the cone~$\normalCone(\nested)$. This implies that~${f_s = f_N}$ is constant on each~$N \in \nested$, and $f_N < f_{N'}$ for~$N, N' \in \nested$ with~$N \subsetneq N'$. Let~$f : \R^\ground \to \R$ be the linear functional defined by~$f(\b{x}) = \dotprod{\b{f}}{\b{x}} = \sum_{s \in \ground} f_s x_s$. Since~$N(C)$ is the inclusion maximal element~$N$ of~$\nested$ such that~$C \cap N \ne \varnothing$ and~$N \to f_N$ is increasing, the face of~$\simplex_C$ maximizing~$f$ is precisely~$\simplex_{C \cap N(C)}$. It follows that~$F_\nested$ is the face maximizing~$f$ on~$\Mink(\b{y})$, since the face maximizing~$f$ on a Minkowski sum is the Minkowski sum of the faces maximizing~$f$ on each summand. We conclude that~$F_\nested$ is a face of~$\Mink(\b{y})$ whose normal cone contains at least~$\normalCone(\nested)$, and therefore that the map~$\nested \to F_\nested$ is a poset anti-homomorphism.

To conclude, it is now sufficient to prove that the dimension of~$F_\nested$ is indeed~$|\ground| - |\nested|$. The inequality~$\dim F_\nested \le |\ground| - |\nested|$ is clear since the normal cone of~$F_\nested$ contains the cone~$\normalCone(\nested)$. To obtain the reverse inequality, observe that
\begin{align*}
\dim F_\nested & = \dim \bigg( \sum_{C \in \summands} y_C \simplex_{C \cap X_{N(C)}} \bigg)
= \dim \bigg( \sum_{N \in \nested} \sum_{\substack{C \in \summands \\ N(C) = N}} \simplex_{C \cap X_N} \bigg) \\
& \ge \sum_{N \in \nested} \dim \bigg( \sum_{\substack{C \in \summands \\ N(C) = N}} \simplex_{C \cap X_N} \bigg)
\ge \sum_{N \in \nested} (|X_N| - 1) = |\ground| - |\nested|.
\end{align*}
The first inequality holds by Lemma~\ref{lem:dimensionMinkowskiSum}\,(i) since the $X_N$ are disjoints. The second inequality follows from the assumption that~$\summands$ is a generating subset of~$\building$. Indeed, fix~$N \in \nested$ and pick an element~$x \in X_N$. Observe that if~$C \in \summands$ is such that~$x \in C \subseteq N$, then~$N(C) = N$. Moreover, since $N$ is the union of the elements~$C \in \summands$ such that $x \in C \subseteq N$, we obtain that $X_N$ is the union of the sets~$C \cap X_N$ over the elements~$C \in \summands$ such that $x \in C \subseteq N$. By Lemma~\ref{lem:dimensionMinkowskiSum}\,(ii), this implies that
\[
\dim \bigg( \sum_{\substack{C \in \summands \\ N(C) = N}} \simplex_{C \cap X_N} \bigg) \ge \dim \bigg( \sum_{\substack{C \in \summands \\ x \in C \subseteq N}} \simplex_{C \cap X_N} \bigg) \ge |X_N| - 1.
\]
This concludes the proof that the dimension of~$F_\nested$ is given by~$|\ground| - |\nested|$, and thus that the map~${\nested \to F_\nested}$ is an anti-isomorphism.
\end{proof}


\section*{Acknoledgments}

I thank Carsten Lange for helpful discussions on the content and presentation of this paper.


\bibliographystyle{alpha}
\bibliography{removahedra}

\begin{thebibliography}{PRW08}

\bibitem[ABD10]{ArdilaBenedettiDoker}
Federico Ardila, Carolina Benedetti, and Jeffrey Doker.
\newblock Matroid polytopes and their volumes.
\newblock {\em Discrete Comput.~Geom.}, 43(4):841--854, 2010.

\bibitem[CD06]{CarrDevadoss}
Michael~P. Carr and Satyan~L. Devadoss.
\newblock Coxeter complexes and graph-associahedra.
\newblock {\em Topology Appl.}, 153(12):2155--2168, 2006.

\bibitem[Dev09]{Devadoss}
Satyan~L. Devadoss.
\newblock A realization of graph associahedra.
\newblock {\em Discrete Math.}, 309(1):271--276, 2009.

\bibitem[FS05]{FeichtnerSturmfels}
Eva~Maria Feichtner and Bernd Sturmfels.
\newblock Matroid polytopes, nested sets and {B}ergman fans.
\newblock {\em Port. Math. (N.S.)}, 62(4):437--468, 2005.

\bibitem[HL07]{HohlwegLange}
Christophe Hohlweg and Carsten Lange.
\newblock Realizations of the associahedron and cyclohedron.
\newblock {\em Discrete Comput.~Geom.}, 37(4):517--543, 2007.

\bibitem[HLT11]{HohlwegLangeThomas}
Christophe Hohlweg, Carsten Lange, and Hugh Thomas.
\newblock Permutahedra and generalized associahedra.
\newblock {\em Adv. Math.}, 226(1):608--640, 2011.

\bibitem[Lod04]{Loday}
Jean-Louis Loday.
\newblock Realization of the {S}tasheff polytope.
\newblock {\em Arch.~Math.~(Basel)}, 83(3):267--278, 2004.

\bibitem[LP13]{LangePilaud}
Carsten Lange and Vincent Pilaud.
\newblock Using spines to revisit a construction of the associahedron.
\newblock Preprint, \texttt{arXiv:1307.4391}, 2013.

\bibitem[Pil13]{Pilaud}
Vincent Pilaud.
\newblock Signed tree associahedra.
\newblock Preprint, \texttt{arXiv:1309.5222}, 2013.

\bibitem[Pos09]{Postnikov}
Alexander Postnikov.
\newblock Permutohedra, associahedra, and beyond.
\newblock {\em Int. Math. Res. Not. IMRN}, (6):1026--1106, 2009.

\bibitem[PRW08]{PostnikovReinerWilliams}
Alexander Postnikov, Victor Reiner, and Lauren~K. Williams.
\newblock Faces of generalized permutohedra.
\newblock {\em Doc.~Math.}, 13:207--273, 2008.

\bibitem[Zel06]{Zelevinsky}
Andrei Zelevinsky.
\newblock Nested complexes and their polyhedral realizations.
\newblock {\em Pure Appl. Math. Q.}, 2(3):655--671, 2006.

\bibitem[Zie95]{Ziegler}
G{\"u}nter~M. Ziegler.
\newblock {\em Lectures on polytopes}, volume 152 of {\em Graduate Texts in
  Mathematics}.
\newblock Springer-Verlag, New York, 1995.

\end{thebibliography}
\label{sec:biblio}

\end{document}